\documentclass[11pt,reqno]{amsart}

\usepackage[linktocpage=true]{hyperref}
\hypersetup{colorlinks,linkcolor=blue,urlcolor=blue,citecolor=blue}
\usepackage{amssymb}
\usepackage[cmtip,all]{xy}
\usepackage{enumerate} 
\usepackage{mathdots} 
\usepackage{blkarray} 

\usepackage{dsfont}
\usepackage[mathscr]{euscript}

\usepackage{amsmath,amssymb,amscd, todonotes,appendix}
\usepackage{graphics,verbatim}
\usepackage{todonotes}

\usepackage[shortlabels]{enumitem}
\usepackage{hyperref}
\usepackage{setspace} 

\usepackage{xcolor}
\usepackage{soul}

\usepackage{float} 
\usepackage[labelformat=empty]{caption}
\usepackage{fancybox} 

\usepackage{extarrows}

\usepackage{bbm}

\usepackage{verbatim}

\usepackage{multirow}

\usepackage{tikz}
\usetikzlibrary{cd,arrows}

\usepgflibrary{shapes.geometric}

\tikzstyle{V}=[draw, fill =black, circle, inner sep=0pt, minimum size=1.5pt]
\tikzstyle{C}=[draw, fill =white, circle, inner sep=0pt, minimum size=1.5pt]
\tikzstyle{over}=[draw=white,double=black,line width=2pt, double distance=.5pt]
\numberwithin{equation}{section}
\setcounter{tocdepth}{1}
\usepackage[margin = 0.9in]{geometry}
\theoremstyle{definition}

\newtheorem*{mainthm}{Theorem A}

\newtheorem*{maincor}{Corollary B}

\newtheorem{theorem}{Theorem}[subsection]

\makeatletter\let\c@fact\c@theorem\makeatother

\makeatletter\let\c@note\c@theorem\makeatother

\makeatletter\let\c@lemma\c@theorem\makeatother

\newtheorem{assumption}[theorem]{Assumption}
\makeatletter\let\c@lemma\c@theorem\makeatother

\makeatletter\let\c@quest\c@theorem\makeatother

\makeatletter\let\c@prop\c@theorem\makeatother

\newtheorem{conjecture}[theorem]{Conjecture}

\makeatletter\let\c@conj\c@theorem\makeatother

\newtheorem{corollary}[theorem]{Corollary}
\makeatletter\let\c@conj\c@theorem\makeatother

\makeatletter\let\c@cor\c@theorem\makeatother

\newtheorem{definition}[theorem]{Definition}
\makeatletter\let\c@defn\c@theorem\makeatother

\theoremstyle{definition}

\newtheorem{remark}{Remark}[subsection]
\makeatletter\let\c@remark\c@theorem\makeatother

\makeatletter
\makeatother


\def\<{\langle}
\def\>{\rangle}

\newcommand{\Ann}{\mathrm{Ann}}

\newcommand{\Ext}{\text{Ext}}

\newcommand{\fg}{\mathfrak{g}}
\newcommand{\fz}{\mathfrak{z}}

\newcommand{\Lie}{\text{Lie}}

\newcommand{\stab}{\textup{stab}}

\newcommand{\0}{\bar 0}

\newcommand{\losemi}{{\otimes \kern -.78em \ltimes}}
\newcommand{\rosemi}{{\otimes \kern -.78em \rtimes}}

\newcommand{\Hom}{\ensuremath{\operatorname{Hom}}}
\newcommand{\End}{\ensuremath{\operatorname{End}}}

\newcommand{\glmn}{\mathfrak{gl}(m|n)}

\newcommand{\Add}{\operatorname{Add}}

\newcommand{\Loc}{\operatorname{Loc}}
\newcommand{\Proj}{\operatorname{Proj}}

\newcommand{\bT}{\mathbf T}
\newcommand{\bK}{\mathbf K}

\newcommand{\unit}{\ensuremath{\mathbbm{1}}}


\newcommand{\Mod}{\operatorname{Mod}}

\newcommand{\Spc}{\operatorname{Spc}}

\newcommand{\Spec}{\operatorname{Spec}}
\newcommand{\Stab}{\operatorname{Stab}}
\newcommand{\Inj}{\operatorname{Inj}}

\newcommand{\supp}{\operatorname{supp}}
\newcommand{\Supp}{\operatorname{Supp}}


\newcommand{\opH}{\operatorname{H}}
\newcommand{\opExt}{\operatorname{Ext}}

\makeatletter
\newcommand{\leqnomode}{\tagsleft@true}
\newcommand{\reqnomode}{\tagsleft@false}
\makeatother



\allowdisplaybreaks







\title{On localizing subcategories of Lie superalgebra representations}
\author{Matthew H. Hamil}

\address{Department of Mathematics, University of Georgia, Athens, GA 30602, USA}
\email{matthew.hamil25@uga.edu}
\begin{document}

\begin{abstract}

We state and prove a stratification result that allows us to classify the tensor ideal localizing subcategories for the stable module category $\Stab(\mathcal{C}_{(\fg, \fg_{\bar 0})})$ of Lie superalgbera representations which are semisimple as representations of $\mathfrak{g}_{\bar 0}$ under the hypotheses that $\mathfrak{g}$ is a classical Lie superalgebra with a splitting detecting subalgebra $\fz \leq\fg$, as well as a natural hypothesis on realization of supports. This extends the work of the author and Nakano in \cite{HN24} where a similar classification was obtained for the stable category of modules over a detecting subalgebra employing stratification in the sense of Benson, Iyengar, and Krause \cite{BIK}. Our new result involves making use of a more general stratification framework in weakly Noetherian contexts developed by Barthel, Heard, and Sanders in \cite{BHS21} using the Balmer-Favi notion of support for big objects in tensor triangulated categories, as well as the recently developed homological stratification of Barthel, Heard, Sanders, and Zou in \cite{BHSZ24} using the homological spectrum. 

\end{abstract}
 
\maketitle 


\section{Introduction}

\subsection{}Let $\mathfrak{g} = \mathfrak{g}_{\0} \oplus \fg_{\bar 1}$ be a classical Lie superalgebra over the complex numbers $\mathbb{C}$. The representation theory of such Lie superalgebras was studied via cohomology and support variety theories in a series of early 2000s papers by Boe, Kujawa, and Nakano \cite{BKN1, BKN2, BKN3, BKN4}. They showed, among other results, that under mild assumptions on the action of the algebraic group $G_{\bar 0}$ on $\fg_{\bar 1}$, there exist subalgebras $\mathfrak{f} = \mathfrak{f}_{\0} \oplus \mathfrak{f}_{\bar 1} \leq \fg$, called detecting subalgebras, that are interesting in the sense that they have markedly simple representation theory, but they nonetheless determine the relative $(\fg, \fg_{\bar 0})$-cohomology. In particular, \cite[Theorem 4.1.1]{BKN1} gives an isomorphism $\opH^{\bullet}(\fg, \fg_{\bar 0}; \mathbb{C}) \cong \opH^{\bullet}(\mathfrak{f}, \mathfrak{f}_{\bar 0}; \mathbb{C})^{N}$, where $N$ is a non-connected reductive group determined by a choice of detecting subalgebra $\mathfrak{f}$. Moreover, $\opH^{\bullet}(\mathfrak{f}, \mathfrak{f}_{\bar 0}; \mathbb{C}) \cong \text{S}^{\bullet}(\mathfrak{f}_{\bar 1}^{*})$, and the relative cohomology for detecting subalgebras are polynomial algebras, so relative $(\fg, \fg_{\bar 0})$-cohomology is finitely generated. 

One can consider the category $\mathcal{C}_{(\fg, \fg_{\bar 0})}$ (resp. $\mathcal{F}_{(\fg, \fg_{\bar 0})}$) whose objects consist of all (resp. finite-dimensional) $\fg$-supermodules which are finitely semisimple as modules over $\fg_{\0}$ and whose morphisms consist of even morphisms between supermodules. These are abelian categories which have enough projective and injective objects. They are also Frobenius categories; i.e., projective and injective objects coincide, so one can form the stable module categories $\Stab(\mathcal{C}_{(\fg, \fg_{\bar 0})})$ and $\stab(\mathcal{F}_{(\fg, \fg_{\bar 0})})$. Objects in $\Stab(\mathcal{C}_{(\fg, \fg_{\bar 0})})$ (resp. $\stab(\mathcal{F}_{(\fg, \fg_{\bar 0}))}$) are the same as the objects in $\mathcal{C}_{(\fg, \fg_{\bar 0})}$ (resp. $\mathcal{F}_{(\fg, \fg_{\bar 0})}$), but in the stable category morphisms consist of equivalence classes of morphisms where two morphisms are considered equivalent if their difference factors through a projective module. 

\par The category $\Stab(\mathcal{C}_{(\fg, \fg_{\bar 0})})$ is a tensor triangulated category which is rigidly-compactly generated by the full tensor triangulated subcategory $\stab(\mathcal{F}_{(\fg, \fg_{\bar 0})})$ of compact-rigid objects. In \cite{BKN3} the authors consider the tensor triangular geometry of these categories. Techniques from geometric invariant theory are used to compute the categorical (Balmer) spectrum for the detecting subalgebras and for the Lie superalgebra $\mathfrak{gl}(m|n)$. For the detecting subalgebras \cite[Theorem 4.5.4]{BKN4} gives a homeomorphism $\Spc(\stab(\mathcal{F}_{(\mathfrak{f}, \mathfrak{f}_{\bar 0})})) \cong \Proj \opH^{\bullet}(\mathfrak{f}, \mathfrak{f}_{\bar 0}; \mathbb{C}) \cong \Proj \text{S}^{\bullet}(\mathfrak{f}_{\bar 1}^{*})$, but for $\fg = \mathfrak{gl}(m|n)$ the situation is a little different. The Balmer spectrum $\Spc(\stab(\mathcal{F}_{(\fg, \fg_{\bar 0})}))$ is not homeomorphic to $\Proj \opH^{\bullet}(\fg, \fg_{\bar 0}; \mathbb{C})$, as one might initially suspect. Instead, one has to consider a stack quotient of the $\Proj$ of the relative cohomology ring of a detecting subalgebra. This is the result of \cite[Theorem 5.2.2]{BKN4} which states that there is a homeomorphism $\Spc(\stab(\mathcal{F}_{(\fg, \fg_{\bar 0})})) \cong N\text{-}\Proj(\text{H}^{\bullet}(\mathfrak{f}, \mathfrak{f}_{\0}; \mathbb{C}))$. 

\subsection{}Often, the representation theory of classical Lie superalgebras over $\mathbb{C}$ resembles representation theory of finite groups in positive characteristic. If $G$ is a finite group and $k$ is an algebraically closed field of characteristic $p$ where $p$ divides the order of $G$, then the group algebra $kG$ is not semisimple and most of the time has wild representation type. The categories $kG$-Mod and $kG$-mod consisting of all (resp. finitely generated) left $kG$-modules are again abelian and Frobenius, so one can form the stable module categories $\text{StMod}(kG)$ and $\text{stmod}(kG)$ which are tensor triangulated categories. An analogous picture holds to Lie superalgebras in that $\text{StMod}(kG)$ is a compactly generated tensor triangulated category whose compact objects are precisely the objects in $\text{stmod}(kG)$. Benson, Carlson, and Rickard first studied the tensor triangular geometry of $\text{stmod}(kG)$ in the late 90s, and the main result of \cite{BCR97} is a classification of the thick $\otimes$-ideal subcategories. This in turn implies that there is a homeomorphism $\Spc(\text{stmod}(kG)) \cong \Proj \opH^{\bullet}(G, k)$. Later on in the 2000s, Benson, Iyengar, and Krause introduced the notion of a tensor triangulated category being stratified by the action of a graded-commutative ring $R$ \cite{BIK11}. This was a fundamentally new idea that not only allowed for a computation of the Balmer spectrum, but that also had the advantage of classifying $\otimes$-ideal localizing subcategories. BIK's theory was applied to show that $\text{StMod}(kG)$ is stratified by the action of the cohomology ring $\opH^{\bullet}(G, k)$, a result which recovers Benson, Carlson, and Rickard's contributions, but also gives a classification of $\otimes$-ideal localizing subcategories in terms of subsets of $\Proj \opH^{\bullet}(G, k)$. 

\subsection{}While stratification was introduced before Boe, Kujawa, and Nakano's computations of Balmer spectra for $\stab(\mathcal{F}_{(\mathfrak{f}, \mathfrak{f}_{\bar 0})})$ and $\stab(\mathcal{F}_{(\mathfrak{gl}(m|n), \mathfrak{gl}(m|n)_{\bar 0})})$, the arguments given in \cite{BKN3} do not rely on any stratification result. In fact, for most classical Lie superalgberas, e.g. $\mathfrak{gl}(m|n)$, the relative $(\fg, \fg_{\bar 0})$-cohomology ring fails to stratify. It was conjectured, however, that for a detecting subalgebra the relative cohomology ring $\text{H}^{\bullet}(\mathfrak{f}, \mathfrak{f}_{\0}; \mathbb{C})$ should stratify $\Stab(\mathcal{C}_{(\mathfrak{f}, \mathfrak{f}_{\bar 0})})$. This conjecture was proved in recent work by the author and Nakano \cite{HN24} from the observation that, just as in the case of elementary abelian two groups in characteristic two, one can reduce the problem to proving a stratification result for the stable category of modules for the exterior algebra $\Lambda^{\bullet}(\mathfrak{f}_{\bar 1})$ viewed as a superalgebra by declaring the generators to be odd, which then comes down to a version of the classical BGG correspondence. In addition to giving a classification of the $\otimes$-ideal localizing subcategories of $\Stab(\mathcal{C}_{(\mathfrak{f}, \mathfrak{f}_{\bar 0})})$, this result was also used in \cite{HN24} to prove results about representations of Lie superalgebras concerning nilpotence and the newly developed homological spectra \cite{Bal20}. Specifically it was shown that for $\mathfrak{g}$ a classical Lie superalgebra with a detecting subalgebra $\fz \leq\fg$ which is splitting in the sense of \cite{SS22}, and satisfying a natural assumption on realization of supports, then there is a homeomorphism $\Spc^{\text{h}}(\stab(\mathcal{F}_{(\fg, \fg_{\bar 0})})) \cong N\text{-}\Proj(\text{H}^{\bullet}(\fz, \fz_{\0}; \mathbb{C}))$. This result led to a verification of Balmer's ``Nerves-of-Steel" conjecture for $\fg = \mathfrak{gl}(m|n)$ \cite[Theorem 7.2.1]{HN24}. 

\subsection{} While the BIK approach to stratifying tensor triangulated categories represented a major breakthrough in the field, the drawback is that it requires that the category be equipped with an action by a graded-commutative Noetherian ring in order to construct the necessary support theory. The case of $\stab(\mathcal{F}_{(\fg, \fg_{\bar 0})})$ is not unique in that in fact, for many important tensor triangulated categories a stratifying ring is not available, rendering them outside of the scope of the BIK machinery. This prompted Barthel, Heard, and Sanders in \cite{BHS21} to develop a theory of stratification called tensor triangular stratification or tt-stratification for short. Tt-stratification for compactly generated tensor triangulated categories is in terms of the support theory developed by Balmer and Favi \cite{BF11} for ``big" objects, and has the advantage that does not presuppose the action of a graded-commutative Noetherian ring. Instead, the supports are constructed using the more intrinsic Balmer spectrum of the full compact subcategory. A drawback to tt-stratification is that descent theorems still have to be handled one TTC at a time. Recently, in \cite{BHSZ24}, Barthel, Heard, Sanders, and Zou developed a third theory of stratification, called h-stratification, which is in terms of the homological spectrum. It is also well behaved under descent. These key insights allow us, in particular, to reduce the problem of studying $\glmn$ to studying a splitting, detecting subalgebra, as well as subalgebras isomorphic to the Lie superalgebra $\mathfrak{q}(1)$. 

The purpose of this paper is to expand upon the work of \cite{HN24} by adding categories of representations of Lie superalgebras to the growing list of tensor triangulated categories for which stratification results are known. We investigate the three notions of stratification for $\Stab(\mathcal{C}_{(\fg, \fg_{\bar 0})})$ when $\mathfrak{g}$ is a classical Lie superalgebra which has a splitting detecting subalgebra $\fz \leq\fg$. Specifically, our theorem is the following.


\begin{mainthm}\label{mainthm}
Let $\fg$ be a classical Lie superalgebra with a splitting detecting subalgebra $\fz \leq\fg$ and which satisfies the realization condition of Assumption \ref{assumption} \cite[Theorem 7.2.1]{HN24}. The tensor triangulated category $\Stab(\mathcal{C}_{(\fg, \fg_{\bar 0})})$ is tt-stratified by the Balmer spectrum $\Spc(\stab(\mathcal{F}_{(\fg, \fg_{\bar 0})}))$, and tt-stratification is equivalent to h-stratification. 
\end{mainthm}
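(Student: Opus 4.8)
The strategy is to bootstrap from the detecting subalgebra case established in \cite{HN24}. I would first reduce tt-stratification of $\Stab(\mathcal{C}_{(\fg,\fg_{\bar 0})})$ to tt-stratification of $\Stab(\mathcal{C}_{(\fz,\fz_{\bar 0})})$ via a descent argument. The key point is that the restriction functor $\Res^{\fg}_{\fz}\colon \Stab(\mathcal{C}_{(\fg,\fg_{\bar 0})})\to\Stab(\mathcal{C}_{(\fz,\fz_{\bar 0})})$ is a coproduct-preserving tensor-triangulated functor, and by the splitting detecting hypothesis together with the cohomological finite generation results of \cite{BKN1} (the isomorphism $\opH^{\bullet}(\fg,\fg_{\bar 0};\C)\cong\opH^{\bullet}(\fz,\fz_{\bar 0};\C)^{N}$), the trivial module $\triv$ generates $\Stab(\mathcal{C}_{(\fg,\fg_{\bar 0})})$ as a localizing tensor ideal after restriction, so $\Res^{\fg}_{\fz}$ satisfies descent in the sense of \cite{BHS21}. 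One then invokes the descent criterion for tt-stratification: if a functor satisfies descent, its source is tt-stratified provided the target is and the induced map on Balmer spectra is suitably compatible. Here the comparison map $\Spc(\stab(\mathcal{F}_{(\fz,\fz_{\bar 0})}))\to\Spc(\stab(\mathcal{F}_{(\fg,\fg_{\bar 0})}))$ is the quotient by the $N$-action coming from \cite[Theorem 5.2.2]{BKN4}, and the realization condition of Assumption \ref{assumption} ensures that every point of the Balmer spectrum is hit by a genuine support, so the stratification descends to a well-defined bijection between tensor ideal localizing subcategories and subsets of $\Spc(\stab(\mathcal{F}_{(\fg,\fg_{\bar 0})}))$.

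For the detecting subalgebra itself, I would recall from \cite{HN24} that $\Stab(\mathcal{C}_{(\fz,\fz_{\bar 0})})$ is stratified by $\opH^{\bullet}(\fz,\fz_{\bar 0};\C)\cong\text{S}^{\bullet}(\fz_{\bar 1}^{*})$ in the BIK sense, and then translate: a BIK stratification by a Noetherian ring acting on a rigidly-compactly generated category always refines to a tt-stratification, because the BIK support and the Balmer--Favi support agree up to the homeomorphism $\Spc(\stab(\mathcal{F}_{(\fz,\fz_{\bar 0})}))\cong\Proj\opH^{\bullet}(\fz,\fz_{\bar 0};\C)$. This is essentially the comparison result relating the two support theories in the Noetherian setting; I would cite the relevant statement from \cite{BHS21} or \cite{BHSZ24}. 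Combined with the $\mathfrak{q}(1)$ building blocks mentioned in the introduction (which are handled similarly, as their stable categories are also BIK-stratified by a polynomial ring in one odd variable), this gives tt-stratification at the level of the detecting subalgebra.

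Finally, for the equivalence of tt-stratification and h-stratification, I would appeal to the general machinery of \cite{BHSZ24}: h-stratification always implies tt-stratification, and the converse holds whenever the comparison map from the Balmer spectrum to the homological spectrum is a bijection — the ``Nerves of Steel'' type condition. In our situation this bijectivity was already verified in \cite[Theorem 7.2.1]{HN24}, which gives the homeomorphism $\Spc^{\text{h}}(\stab(\mathcal{F}_{(\fg,\fg_{\bar 0})}))\cong N\text{-}\Proj(\opH^{\bullet}(\fz,\fz_{\bar 0};\C))\cong\Spc(\stab(\mathcal{F}_{(\fg,\fg_{\bar 0})}))$. Since the Balmer spectrum here is Noetherian (being a stack quotient of a $\Proj$ of a finitely generated graded ring), the weakly Noetherian hypotheses of \cite{BHS21,BHSZ24} are met, and the equivalence follows formally.

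**Expected main obstacle.** The hard part will be establishing descent cleanly: one must check that $\Res^{\fg}_{\fz}$ genuinely satisfies the descent condition of \cite{BHS21} — i.e., that the associated comonad is ``nil-faithful'' or that the trivial module, viewed through the induced-module construction, builds the unit via a finite filtration whose subquotients are restricted from $\fz$. The splitting hypothesis is precisely what should make this work (it is designed to produce such a filtration, analogous to the role of Serganova--Sherman's splitting subalgebras), but verifying the compatibility of the Balmer--Favi supports under restriction, and matching them with the $N$-orbit structure so that the descended classification is stated in terms of $\Spc(\stab(\mathcal{F}_{(\fg,\fg_{\bar 0})}))$ rather than its cover, requires care with the realization Assumption \ref{assumption}. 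I would isolate this as the technical heart of the argument and treat the BIK-to-tt and tt-to-h passages as comparatively formal.
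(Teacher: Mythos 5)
Your overall architecture (reduce to the detecting subalgebra via descent, translate BIK stratification to tt-stratification via the Noetherian universality theorem of \cite{BHS21}, then equate tt- and h-stratification via the Nerves-of-Steel result of \cite{HN24} together with Theorem E of \cite{BHSZ24}) matches the paper's, and those three ingredients are indeed the core of the argument. The main structural difference is that you propose to run descent through the single functor $\Res^{\fg}_{\fz}\colon \Stab(\mathcal{C}_{(\fg,\fg_{\bar 0})})\to\Stab(\mathcal{C}_{(\fz,\fz_{\bar 0})})$, whereas the paper applies the descent theorem \cite[Theorem A]{BHSZ24} to the larger \emph{family} of restriction functors $\pi^{\fg}_x\colon \Stab(\mathcal{C}_{(\fg,\fg_{\bar 0})})\to\Stab(\mathcal{C}_{(\langle x\rangle,\langle x\rangle_{\bar 0})})$, $x\in\fz_{\bar 1}$, whose targets are one-dimensional abelian or $\mathfrak{q}(1)$ and in particular are tensor triangular fields. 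The paper then verifies the generation condition (c) by factoring $\operatorname{ind}^G_X=\operatorname{ind}^G_Z\operatorname{ind}^Z_X$ through the supergroup $Z$, using the splitting hypothesis for the $G/Z$ leg and re-applying the descent theorem for the $Z/X$ leg. Your single-functor version shortcuts this: the splitting hypothesis alone gives condition (c) (every $M$ is a direct summand of $\operatorname{ind}^G_Z\operatorname{res}^G_Z M\cong M\otimes\operatorname{ind}^G_Z\mathbb{C}$), detection of zero objects is provided by Theorem \ref{T:splittingprop}(b), and the target is tt-stratified by Theorem \ref{BIKstratified} together with \cite[Theorem D]{BHS21}. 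That route does work and is arguably leaner; the paper opts for the family to mirror the Quillen/BIK strategy of descending to tensor triangular fields, which also sets up the machinery used elsewhere (e.g.\ the Nerves-of-Steel argument in \cite{HN24}).

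Two points in your write-up should be corrected. First, you invoke language from the \cite{BHS21} \emph{quotient descent} framework (nil-faithful comonads, finite filtrations of the unit), but the theorem actually being applied is \cite[Theorem A]{BHSZ24}, whose generation hypothesis (c) is verified directly and elementarily from the splitting property: the unit is a direct summand of $\operatorname{ind}^G_Z\mathbb{C}$, hence by the projection formula every object is a summand of something induced from $Z$. No comonadic bookkeeping or filtration argument is needed, and framing it that way obscures the simplicity of the step. Second, your worry about ``matching the Balmer--Favi supports with the $N$-orbit structure so that the classification is stated in terms of $\Spc(\stab(\mathcal{F}_{(\fg,\fg_{\bar 0})}))$'' is a red herring: tt-stratification is a statement intrinsic to $\Stab(\mathcal{C}_{(\fg,\fg_{\bar 0})})$ and its own Balmer--Favi support on $\Spc(\stab(\mathcal{F}_{(\fg,\fg_{\bar 0})}))$, and the descent theorem delivers exactly that conclusion; the identification with $N\text{-}\Proj(\opH^{\bullet}(\fz,\fz_{\bar 0};\mathbb{C}))$ is applied afterwards, as a separate cosmetic step, via \cite[Theorem 5.2.2]{BKN4}.
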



Classical Lie superalgebras which satisfy the hypotheses of Theorem A include classical Lie superalgebras of Type A. As a consequence of stratification, we obtain the classification of $\otimes$-ideal localizing subcategories in terms of arbitrary subsets of the Balmer spectrum. This fact, combined with the Balmer spectrum computations of Boe, Kujawa, and Nakano, yields the following corollary. 


\begin{maincor}\label{maincorollary}
Let $\fg$ be a classical Lie superalgebra with a splitting detecting subalgebra $\fz \leq\fg$ and which satisfies the realization condition of Assumption \ref{assumption}. There is a bijection between the set of $\otimes$-ideal localizing subcategories of $\Stab(\mathcal{C}_{(\fg, \fg_{\bar 0})})$ and subsets of $N\text{-}\Proj(\text{H}^{\bullet}(\fz, \fz_{\0}; \mathbb{C}))$.
\end{maincor}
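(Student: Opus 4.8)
The plan is to deduce Corollary B formally from Theorem A together with the known computation of the Balmer spectrum. First I would recall the general principle, due to Barthel--Heard--Sanders \cite{BHS21}, that once a rigidly-compactly generated tensor triangulated category $\mathcal T$ is tt-stratified by its Balmer spectrum $\Spc(\mathcal T^c)$, the Balmer--Favi support induces an inclusion-preserving bijection between the $\otimes$-ideal localizing subcategories of $\mathcal T$ and the arbitrary subsets of $\Spc(\mathcal T^c)$. Theorem A tells us precisely that $\mathcal T = \Stab(\mathcal C_{(\fg,\fg_{\bar 0})})$ is tt-stratified by $\Spc(\stab(\mathcal F_{(\fg,\fg_{\bar 0})}))$ under the stated hypotheses, so the abstract bijection applies verbatim with $\mathcal T^c = \stab(\mathcal F_{(\fg,\fg_{\bar 0})})$.

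Second, I would identify the target of this bijection with the space appearing in the corollary. By the computation of Boe--Kujawa--Nakano, specifically \cite[Theorem 5.2.2]{BKN4} recalled in the introduction, there is a homeomorphism $\Spc(\stab(\mathcal F_{(\fg,\fg_{\bar 0})})) \cong N\text{-}\Proj(\opH^{\bullet}(\fz,\fz_{\0};\CC))$, where $\fz$ is the chosen splitting detecting subalgebra and $N$ the associated non-connected reductive group; one should note that in the Type A case the detecting subalgebra plays the role of $\mathfrak f$ in that theorem, and that the splitting hypothesis is exactly what guarantees the relevant spectrum is computed by $\fz$ rather than a different detecting subalgebra. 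Since a homeomorphism of spaces induces a bijection on the lattices of all subsets (indeed a lattice isomorphism), composing with the bijection from the previous paragraph yields the desired bijection between $\otimes$-ideal localizing subcategories of $\Stab(\mathcal C_{(\fg,\fg_{\bar 0})})$ and subsets of $N\text{-}\Proj(\opH^{\bullet}(\fz,\fz_{\0};\CC))$.

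Finally, I would record that the bijection is given concretely by sending a localizing $\otimes$-ideal $\mathcal L$ to the union of the Balmer--Favi supports of its objects (a subset of the homological/Balmer spectrum, transported along the homeomorphism), with inverse sending a subset $S$ to the full subcategory of objects supported on $S$; I would also remark that it restricts, on the compact part, to the Benson--Carlson--Rickard / Balmer-type classification of thick $\otimes$-ideals of $\stab(\mathcal F_{(\fg,\fg_{\bar 0})})$ by \emph{specialization-closed} subsets, consistent with \cite{BKN3}. There is essentially no obstacle here beyond correctly invoking the machinery: the only point requiring a word of care is checking that all the running hypotheses of \cite{BHS21} (rigid-compact generation, the weakly Noetherian condition on $\Spc(\mathcal T^c)$) are in force, but these are already subsumed in the statement and proof of Theorem A, so the corollary is immediate once Theorem A is established.
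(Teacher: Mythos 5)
Your proposal is correct and follows exactly the route the paper (implicitly) intends: combine Theorem A with the abstract classification theorem of Barthel--Heard--Sanders for tt-stratified categories and with the Boe--Kujawa--Nakano computation of the Balmer spectrum as $N\text{-}\Proj(\opH^{\bullet}(\fz,\fz_{\0};\CC))$. The paper states this corollary without a separate proof, treating it as an immediate consequence of Theorem A together with the spectrum computation, which is precisely what you have spelled out.
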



Our work is inspired by the strategy outlined for modular representation theory in \cite{BIK} where BIK stratification was proven for $\text{stmod}(kG)$ where $G$ is an arbitrary group by first proving the result for elementary abelian $G$ is an elementary abelians, and then ``bootstrapping" up by applying a version of Quillen's stratification theorem and Choinard's theorem. Their argument involves several ``change-of-categories" type arguments including a consideration of the homotopy category of injectives $\mathbf{K}(\text{Inj }kG)$. We make use of some of the analogous results in our setting, specifically for studying detecting subalgebras.


\subsection*{Acknowledgments}      
The author would like to express his gratitude to his Ph.D. advisor, Daniel K. Nakano, for his guidance and encouragement throughout this project.


\section{Tensor triangulated categories}

\subsection{Triangulated Categories}

Recall that a triangulated category $\mathscr{T}$ is an additive category equipped with an auto-equivalence $\Sigma : \mathscr{T} \to \mathscr{T}$ called the shift, and a class of distinguished triangles: 
$$
M \to N \to Q \to \Sigma M
$$
all subject to a list of axioms the reader can find in, for example, \cite[Ch. 1]{Nee01}. 
\par A non-empty, full, additive subcategory $\mathscr{S}$ of $\mathscr{T}$ is called a \emph{triangulated subcategory} if (i) $M \in \mathscr{S}$ implies that $\Sigma^n M \in \mathscr{S}$ for all $n \in \mathbb{Z}$ and (ii) if $M \to N \to Q \to \Sigma M$ is a distinguished triangle in $\mathscr{T},$ and two of $\{M, N, Q\}$ are objects in $\mathscr{S},$ then the third object is also in $\mathscr{S}.$ A triangulated subcategory $\mathscr{S}$ of $\mathscr{T}$ is called \emph{thick} if $\mathscr{S}$ is closed under taking direct summands. 

A triangulated subcategory $\mathscr{S}$ of $\mathscr{T}$ is called a \emph{localizing subcategory} if $\mathscr{S}$ is closed under taking set-indexed coproducts. It follows from a version of the Eilenberg swindle that localizing subcategories are necessarily thick. It is a central problem in the subject to, given a triangulated category, classify its thick subcategories and localizing subcategories. 

An object $C \in \mathscr{T}$ is called \emph{compact} if $\Hom_{\mathscr{T}}(C,-)$ commutes with set-indexed coproducts. The full subcategory of compact objects in $\mathscr{T}$ is denoted by $\mathscr{T}^c$, and the triangulated category $\mathscr{T}$ is said to be \emph{compactly generated} if the isomorphism classes of compact objects form a set, and if for each non-zero object $M \in \mathscr{T}$ there is an object $C \in \mathscr{T}^c$ such that $\Hom_{\mathscr{T}}(C,M) \neq 0$.

\subsection{Tensor triangulated categories}
A tensor triangulated category (TTC) is a triple $(\mathscr{K}, \otimes, \unit)$ consisting of a triangulated category $\mathscr{K}$, a symmetric, monoidal (tensor) product $\otimes: \mathscr{K} \times \mathscr{K} \to \mathscr{K}$ which is exact in each variable, and a monoidal unit $\unit$. For the remainder of this section, $\mathscr{K}$ denotes a TTC.

The usual paradigm involves the situation when the TTC $\mathscr{K}$ is rigidly-compactly generated as a TTC by its full subcategory of compact-rigid objects $\mathscr{K}^c$. By definition this means that (i) $\mathscr{K}$ is closed under set indexed coproducts, (ii) the tensor product preserves set-indexed coproducts, (iii) $\mathscr{K}$ is compactly generated as a triangulated category, (iv) the tensor product of compact objects is compact, (v) $\unit$ is a compact object, and (vi) every compact object is rigid (i.e. dualizable). 

The additional structure of the tensor product possessed by TTCs makes the problem of classifying localizing subcategories and thick ideals more tractable. When working in the context of a TTC however, one often focuses on classifications for $\otimes$-ideal localizing subcategories and thick $\otimes$-ideal subcategories of compact objects. In many situations these match up with the purely triangular notions (c.f. \cite[Section 4.1.4]{BIK}). For example, if $\mathscr{K}$ is monogenic (generated by the unit object $\unit$), as in the case of finite groups, then every localizing subcategory is automatically a $\otimes$-ideal. One difference between previously considered representation categories and our theory is that the categories of Lie superalgebra representations of interest in this paper are in general \emph{not} monogenic. Because of this feature our available tools restrict us to a classification of $\otimes$-ideal localizing subcategories rather than arbitrary localizing subcategories.

\subsection{Support for objects in TTCs}
The study of TTCs frequently involves considering various support spaces and notions of support for objects. The relevant definition here is that of a support datum, which was originally given in \cite{Bal05}. The definition we give is slightly more general, and is more suited to our paper. Let $\mathscr{K}$ be a TTC, $X$ be a Zariski topological space (\cite[Section 2.3]{BKN3}), and let $\mathcal{X}$ denote the collection of all subsets of $X$. A \emph{support datum} on $\mathscr{K}$ is an assignment $V: \mathscr{K} \to \mathcal{X}$ such that the following properties hold for $M, N, M_i, Q$ objects in $\mathscr{K}$: 
\begin{itemize}

\item[(2.3.1)] $V(0) = \emptyset$, and $V(\unit) = X$; 
\item[(2.3.2)] $V(\oplus_{i \in I}M_i) = \cup_{i \in I}V(M_i)$ provided that $\oplus_{i \in I}M_i$ is an object of $\mathscr{K}$; 
\item[(2.3.3)] $V(\Sigma M) = V(M)$
\item[(2.3.4)] for any distinguished triangle $M \to N \to Q \to \Sigma M$, $V(N) \subseteq V(M) \cup V(Q)$; 
\item[(2.3.5)] $V(M \otimes N) = V(M) \cap V(N)$; 
\end{itemize}
Often useful are support data which satisfy the additional two properties: 
\begin{itemize}
\item[(2.3.6)] $V(M) = \emptyset$ if and only if $M = 0$. 
\item[(2.3.7)] for any closed subset $W \in \mathcal{X}$, there exists an object $M$ in $\mathscr{K}^c$ such that $V(M) = W$. 
\end{itemize}
Property (2.3.7) is often called the \emph{realization property}, and support data which satisfy Property 2.3.6 are called \emph{faithful}. 

\subsection{The Balmer spectrum of a TTC} In his foundational 2005 paper \cite{Bal05}, Balmer uses the tensor product and unit object to associate to each TTC, $\mathscr{K}$, a topological space known as the categorical (Balmer) spectrum $\Spc(\mathscr{K})$ in a way analogous to the construction of the prime spectrum of a commutative ring. Define a $\otimes$-ideal in $\mathscr{K}$ as a full triangulated subcategory $\mathscr{I}$ of $\mathscr{K}$ such that $M \otimes N \in \mathscr{I}$ for all $M \in \mathscr{I}$ and $N \in \mathscr{K}$. A proper, thick $\otimes$-ideal $\mathscr{P}$ of $\mathscr{K}$ is said to be prime if, for objects $M$ and $N$ in $\mathscr{K}$, $M \otimes N \in \mathscr{P}$ implies that $M \in \mathscr{P}$ or $N \in \mathscr{P}$. The Balmer spectrum is then defined as 
$$
\Spc(\mathscr{K}) := \{\mathscr{P} \subsetneq \mathscr{K} \ | \ \mathscr{P} \text{ is a prime ideal}\}.
$$
The topology on $\Spc(\mathscr{K})$ is the familiar Zariski topology which has the closed sets given by 
$$
Z(\mathscr{C}) := \{\mathscr{P} \in \Spc(\mathscr{K}) \ | \ \mathscr{C} \cap \mathscr{P} \neq \emptyset\},
$$
where $\mathscr{C}$ is an arbitrary collection of objects in $\mathscr{K}$.


An important result of Balmer, \cite[Theorem 3.2]{Bal05}, has to do with supports for essentially small TTCs constructed via the categorical spectrum. One can construct a support datum on $\mathscr{K}^c$ as follows. Given an object $M \in \mathscr{K}^c$, define 
$$
\supp_{\text{Bal}}(M) := \{\mathscr{P} \in \Spc(\mathscr{K}^c) \ | \ M \notin \mathscr{P}\}.
$$
Balmer showed that the support datum given by $(\Spc(\mathscr{K}^c), \supp_{\text{Bal}}(-))$ is universal in the sense that if $(X, V)$ is any support datum on $\mathscr{K}^c$, then there exists a unique continuous map $f: X \to \Spc(\mathscr{K}^c)$ such that $V(M) = f^{-1}(\supp_{\text{Bal}}(M))$. 

\subsection{Extending support to big objects} Let $\mathscr{K}$ be a rigidly-compactly generated TTC, and let $\mathscr{K}^c$ denote its full subcategory of compact-rigid objects. One might try to construct a universal notion of support for big TTCs which generalizes Balmer's construction for essentially small situations. Morally speaking, Balmer's construction fails to provide such a universal support for big objects because one expects that supports for big objects should be open, while Balmer supports are closed by definition. It turns out that no one has succeeded in constructing universal support for big objects, and it may be impossible to construct such a support datum in general \cite{BKS20}. Nonetheless, we are still interested in constructing supports for big objects which, though not universal, still prove useful in practice. The following definition is then motivated.

\begin{definition}
Let $\mathscr{K}$ be a rigidly-compactly generated TTC and let $\mathscr{K}^c$ denote the full subcategory of compact-rigid objects. Let $(X, V)$ be a support datum on $\mathscr{K}^c$ where supports are closed subsets of $X$. The pair $(X, \mathcal{V})$ is said to be an \emph{extension} of $(X, V)$ if $\mathcal{V}$ is an assignment from $\mathscr{K}$ to arbitrary subsets of $X$ satisfying the following: 

\begin{itemize}
\item[(a)] $\mathcal{V}$ satisfies properties (2.3.1)-(2.3.5) for objects in $\mathscr{K}$; 
\item[(b)] $\mathcal{V}(M) = V(M)$ for all $M$ in $\mathscr{K}^c$; and 
\item[(c)] if $V$ satisfies (2.3.7) then $\mathcal{V}$ satisfies (2.3.7). 
\end{itemize}
\end{definition}

A notable example of such an extension of support data are the Balmer-Favi supports which are reviewed in Section 2.6. See also \cite{Bal20}. 


\subsection{BIK support}
Let $\mathscr{K}$ be a rigidly-compactly generated TTC, and let $\mathscr{K}^c$ denote the full subcategory of compact-rigid objects. Aimed at obtaining tensor triangular classifications, Benson, Iyengar, and Krause defined an extension of Balmer's universal support datum for big objects. The additional assumption needed throughout this subsection necessary to define these so-called BIK supports is that of an auxiliary canonical action of a graded-commutative ring $R$ on $\mathscr{K}$. Let us explain what this means. One can always consider the graded-center $Z^{\bullet}(\mathscr{K})$ of $\mathscr{K}$ which is a graded-commutative ring whose degree $n$ component is given by 
$$
Z^n(\mathscr{K}) = \{ \eta : \text{Id}_{\mathscr{K}} \longrightarrow \Sigma^n \ | \ \eta\Sigma = (-1)^n \Sigma\eta \}.
$$
An action of $R$ on $\mathscr{K}$ is a homomorphism of graded-commutative rings $\phi : R \to Z^{\bullet}(\mathscr{K})$. If $\mathscr{K}$ admits an $R$-action, then $\mathscr{K}$ is called $R$-linear. 
\par Given objects $M$ and $N$ in $\mathscr{K},$ set  
$$
\Hom^{\bullet}_{\mathscr{K}}(M,N) := \bigoplus_{i \in \mathbb{Z}}\Hom_{\mathscr{K}}(M, \Sigma^i N). 
$$
Then $\Hom^{\bullet}_{\mathscr{K}}(M,N)$ is a graded abelian group, and $\End^{\bullet}_{\mathscr{K}}(M):= \Hom^{\bullet}_{\mathscr{K}}(M, M)$ is a graded ring where the multiplication is given by applying the shift and then composing morphisms. Notice that $\Hom^{\bullet}_{\mathscr{K}}(M,N)$ is a right $\End^{\bullet}_{\mathscr{K}}(M)$ and a left $\End^{\bullet}_{\mathscr{K}}(N)$-bimodule. It follows that $\mathscr{K}$ being $R$-linear is equivalent to their being, for each object $M$ in $\mathscr{K}$, an induced homomorphism of graded rings $\phi_M : R \to \End^{\bullet}_{\mathscr{K}}(M)$ such that the induced $R$-module structures on $\Hom^{\bullet}_{\mathscr{K}}(M,N)$ by $\phi_M$ and $\phi_N$ agree up to the usual sign.
\par Using the tensor product in $\mathscr{K}$ allows one to construct an action of the graded endomorphism ring $\End^{\bullet}_{\mathscr{K}}(\unit)$ of the unit object via the maps defined by taking, for each $M$ in $\mathscr{K}$, $\phi_M : \End^{\bullet}_{\mathscr{K}}(\unit) \to \End^{\bullet}_{\mathscr{K}}(M)$ given by tensoring with $M$. Provided that $\End^{\bullet}_{\mathscr{K}}(\unit)$ is Noetherian, any action on $\mathscr{K}$ induced via this action is called canonical. 
\par BIK supports for objects in $\mathscr{K}$ are given in terms of the homogenous prime ideal spectrum $\Proj R$ of $R$. Defining support in terms of $\Proj R$ has its origins in the work of Benson, Carlson, and Rickard \cite{BCR97}, who use this support to classify thick subcategories of the stable module category of a finite group. For each $\mathfrak{p} \in \Proj R$, a deep result in Bousfield localization allows one to construct an exact, local cohomology functor $\mathit{\Gamma}_{\mathfrak{p}}: \mathscr{K} \to \mathscr{K}$. Properties of these local cohomology functors can be found in Section 3.1.2 of \cite{BIK}. The space of BIK supports for $\mathscr{K}$ is defined to be 
$$
\Supp_{\text{BIK}}(\mathscr{K}) := \{ \mathfrak{p} \in \Proj(R) \ | \ \mathit{\Gamma_{\mathfrak{p}}}(\mathscr{K}) \neq 0 \}. 
$$
For an object $M \in \mathscr{K}$, the BIK support of $M$ is defined as 
$$
\Supp_{\text{BIK}}(M) := \{ \mathfrak{p} \in \Proj(R) \ | \ \mathit{\Gamma_{\mathfrak{p}}}(M) \neq 0 \}. 
$$
That $(\Supp_{\text{BIK}}(\mathscr{K}),\Supp_{\text{BIK}}(-))$ is a support data on $\mathscr{K}$ which extends $(\Spc(\mathscr{K}^c), \supp_{\text{Bal}}(-))$ is the main content \cite[Chapter 3]{BIK}.

\subsection{BIK stratification}
Let $\mathscr{K}$ be a rigidly-compactly generated TTC, and $\mathscr{K}^c$ denote the full subcategory of compact objects of $\mathscr{K}$. Let $R$ be a graded-commutative Noetherian ring, and assume that $\mathscr{K}$ is $R$-linear. For this section only we follow the lead of \cite{BIK} by assuming furthermore that $\mathscr{K}$ is \emph{monogenic}; i.e., that $\mathscr{K}$ is compactly generated by the unit object $\unit$ of $\mathscr{K}$.
\par As a first application of BIK supports, one can construct maps between the collection of $\otimes$-ideal localizing subcategories of $\mathscr{K}$ and arbitrary subsets of $\Proj R$. The maps are defined as follows. Given a $\otimes$-ideal localizing subcategory $\mathscr{C}$ of $\mathscr{K}$, set 
$$
\sigma(\mathscr{C}) = \Supp_{\text{BIK}}(\mathscr{C}) = \{\mathfrak{p} \in \Proj(R) \ | \ \mathit{\Gamma_{\mathfrak{p}}}(\mathscr{C}) \neq 0\}. 
$$
Next, given a subset $V$ of $\Proj(R)$, set 
$$
\tau(V) = \{M \in \mathscr{K} \ | \ \Supp_{\text{BIK}}(M) \subseteq V\}.
$$ 
BIK stratification has to do with two conditions which, when satisfied, guarantee that $\sigma$ and $\tau$ provide mutually inverse bijections. The two conditions are the following. 

\begin{itemize} 

\item[(2.7.1)] \emph{The BIK local-to-global principle}: for each object $M$ in $\mathscr{K}$, 
\begin{center} 
$\Loc_{\otimes}\langle \{M \} \rangle = \Loc_{\otimes}\langle \{\mathit{\Gamma}_{\mathfrak{p}}M \ | \ \mathfrak{p} \in \Proj R\} \rangle$. 
\end{center}

\item[(2.7.2)] \emph{The BIK minimality condition}: for $\mathfrak{p} \in \Supp_{\text{BIK}}(\mathscr{K})$, the subcategory $\mathit{\Gamma}_{\mathfrak{p}}\mathscr{K}$ is a minimal $\otimes$-ideal localizing subcategory of $\mathscr{K}$. 

\end{itemize}

When conditions (2.7.1) and (2.7.2) hold, $\mathscr{K}$ is said to be \emph{stratified in the sense of BIK}. The following theorem \cite[Theorem 4.19]{BIK} gives the classification. 

\begin{theorem}
If $\mathscr{K}$ is stratified in the sense of BIK, then the maps $\sigma$ and $\tau$ provide mutually inverse bijections between the set of $\otimes$-ideal localizing subcategories of $\mathscr{K}$ and subsets of $\Proj R$: 
$$
\{\text{$\otimes$-ideal localizing subcategories of } \mathscr{K}\} \xleftrightarrow[\tau]{\sigma} \{\text{subsets of } \Supp_{\text{BIK}} \mathscr{K}\}.
$$

\end{theorem}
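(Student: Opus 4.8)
The strategy is to verify directly that $\sigma$ and $\tau$ are mutually inverse, invoking only the hypotheses (2.7.1) and (2.7.2) together with the standard formal properties of the local cohomology functors $\mathit{\Gamma}_{\mathfrak{p}}$ recorded in \cite[Section 3.1.2]{BIK}: idempotence, the identification of $\mathit{\Gamma}_{\mathfrak{p}}\mathscr{K}$ with the essential image of $\mathit{\Gamma}_{\mathfrak{p}}$, the orthogonality relation $\mathit{\Gamma}_{\mathfrak{q}}\mathit{\Gamma}_{\mathfrak{p}} \simeq 0$ for $\mathfrak{q} \neq \mathfrak{p}$, and tensor-compatibility $\mathit{\Gamma}_{\mathfrak{p}}(M \otimes N) \simeq (\mathit{\Gamma}_{\mathfrak{p}}M) \otimes N$. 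The first step is to check that $\tau(V)$ is a $\otimes$-ideal localizing subcategory for every subset $V$: closure under shifts, cones, and set-indexed coproducts is immediate from properties (2.3.3), (2.3.4), (2.3.2) of $\Supp_{\text{BIK}}(-)$; thickness is automatic for localizing subcategories; and the $\otimes$-ideal property follows from (2.3.5) via $\Supp_{\text{BIK}}(M \otimes N) = \Supp_{\text{BIK}}(M) \cap \Supp_{\text{BIK}}(N) \subseteq \Supp_{\text{BIK}}(M)$. So $\sigma$ and $\tau$ are well defined, and both are visibly inclusion-preserving.

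For $\sigma \circ \tau = \mathrm{id}$: the inclusion $\sigma(\tau(V)) \subseteq V$ is immediate from the definitions. Conversely, fix $\mathfrak{p} \in V$ with $\mathit{\Gamma}_{\mathfrak{p}}\mathscr{K} \neq 0$ and choose a nonzero $M \in \mathit{\Gamma}_{\mathfrak{p}}\mathscr{K}$. Idempotence gives $M \simeq \mathit{\Gamma}_{\mathfrak{p}}M \neq 0$, so $\mathfrak{p} \in \Supp_{\text{BIK}}(M)$, while orthogonality gives $\mathit{\Gamma}_{\mathfrak{q}}M \simeq \mathit{\Gamma}_{\mathfrak{q}}\mathit{\Gamma}_{\mathfrak{p}}M \simeq 0$ for $\mathfrak{q} \neq \mathfrak{p}$; hence $\Supp_{\text{BIK}}(M) = \{\mathfrak{p}\} \subseteq V$, so $M \in \tau(V)$ and thus $\mathfrak{p} \in \sigma(\tau(V))$. (The requirement $\mathit{\Gamma}_{\mathfrak{p}}\mathscr{K} \neq 0$ is exactly why the relevant side of the bijection is $\Supp_{\text{BIK}}\mathscr{K}$; minimality is not needed for this direction.)

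For $\tau \circ \sigma = \mathrm{id}$, where both hypotheses are used: the inclusion $\mathscr{C} \subseteq \tau(\sigma(\mathscr{C}))$ holds by the definition of $\sigma$. For $\tau(\sigma(\mathscr{C})) \subseteq \mathscr{C}$, take $M$ with $\Supp_{\text{BIK}}(M) \subseteq \sigma(\mathscr{C})$. The local-to-global principle (2.7.1) gives $\Loc_{\otimes}\langle M \rangle = \Loc_{\otimes}\langle \mathit{\Gamma}_{\mathfrak{p}}M : \mathfrak{p} \in \Proj R \rangle$, so it suffices to show $\mathit{\Gamma}_{\mathfrak{p}}M \in \mathscr{C}$ for every $\mathfrak{p}$. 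If $\mathit{\Gamma}_{\mathfrak{p}}M = 0$ this is clear; otherwise $\mathfrak{p} \in \Supp_{\text{BIK}}(M) \subseteq \sigma(\mathscr{C})$, so there is some $N \in \mathscr{C}$ with $\mathit{\Gamma}_{\mathfrak{p}}N \neq 0$, and $\mathit{\Gamma}_{\mathfrak{p}}N \simeq N \otimes \mathit{\Gamma}_{\mathfrak{p}}\unit \in \mathscr{C}$ since $\mathscr{C}$ is a $\otimes$-ideal. As $\mathit{\Gamma}_{\mathfrak{p}}N$ is a nonzero object of the $\otimes$-ideal localizing subcategory $\mathit{\Gamma}_{\mathfrak{p}}\mathscr{K}$, the minimality condition (2.7.2) yields $\Loc_{\otimes}\langle \mathit{\Gamma}_{\mathfrak{p}}N \rangle = \mathit{\Gamma}_{\mathfrak{p}}\mathscr{K}$, and since $\mathit{\Gamma}_{\mathfrak{p}}N \in \mathscr{C}$ this forces $\mathit{\Gamma}_{\mathfrak{p}}\mathscr{K} \subseteq \mathscr{C}$; in particular $\mathit{\Gamma}_{\mathfrak{p}}M \in \mathscr{C}$. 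Since $\sigma$ and $\tau$ preserve inclusions, being mutually inverse promotes to an order isomorphism.

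The only genuinely hard inputs, namely the local-to-global decomposition and the minimality of each $\mathit{\Gamma}_{\mathfrak{p}}\mathscr{K}$, are taken as hypotheses here, so the remainder of the argument is formal manipulation of the support axioms and of $\mathit{\Gamma}_{\mathfrak{p}}$. The one point deserving care is the implication ``$N \in \mathscr{C} \Rightarrow \mathit{\Gamma}_{\mathfrak{p}}N \in \mathscr{C}$'', which rests on $\mathit{\Gamma}_{\mathfrak{p}}$ being realized by tensoring with the object $\mathit{\Gamma}_{\mathfrak{p}}\unit \in \mathscr{K}$ and on $\mathscr{C}$ being a $\otimes$-ideal; this is why the classification is stated for $\otimes$-ideal localizing subcategories, although under the monogenic hypothesis in force it is automatic, as every localizing subcategory is then a $\otimes$-ideal.
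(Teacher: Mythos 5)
The paper does not supply its own proof of this statement; it is quoted from \cite[Theorem 4.19]{BIK} and cited as such. Your argument is a correct reconstruction of the standard BIK proof, and all three steps are right: $\tau(V)$ is a $\otimes$-ideal localizing subcategory by the support axioms, $\sigma\circ\tau=\mathrm{id}$ on subsets of $\Supp_{\text{BIK}}\mathscr{K}$ via idempotence and orthogonality of the $\mathit{\Gamma}_{\mathfrak{p}}$'s, and $\tau\circ\sigma=\mathrm{id}$ is exactly where the local-to-global principle (2.7.1) and minimality (2.7.2) enter.

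One point worth making explicit: your proof leans on the formula $\mathit{\Gamma}_{\mathfrak{p}}N\simeq N\otimes\mathit{\Gamma}_{\mathfrak{p}}\unit$ both to show that $\mathit{\Gamma}_{\mathfrak{p}}N$ stays in the $\otimes$-ideal $\mathscr{C}$ and (implicitly) to see that $\mathit{\Gamma}_{\mathfrak{p}}\mathscr{K}$ is itself a $\otimes$-ideal, so that minimality applies to the sublocalizing $\otimes$-ideal $\Loc_{\otimes}\langle\mathit{\Gamma}_{\mathfrak{p}}N\rangle\subseteq\mathit{\Gamma}_{\mathfrak{p}}\mathscr{K}$. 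This tensor description of the local cohomology functors holds because the $R$-action is assumed to be canonical, i.e.\ to factor through tensoring against $\End^{\bullet}_{\mathscr{K}}(\unit)$, which is the standing hypothesis in the BIK framework the paper is importing; for a non-canonical $R$-linear structure the identification $\mathit{\Gamma}_{\mathfrak{p}}=\mathit{\Gamma}_{\mathfrak{p}}\unit\otimes-$ need not hold and the $\otimes$-ideal steps would require a separate argument. Your closing remark that under the monogenic hypothesis every localizing subcategory is automatically a $\otimes$-ideal is also correct and clarifies why the statement is often given without the word ``ideal'' in the finite-group case.
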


A useful fact that applies in many practical situations is that the BIK local-to-global principle automatically holds in instances where the Krull dimension of $\Proj R$ is finite. This reduces much of the work involved with verifying BIK stratification to the minimality condition. 


\subsection{Balmer-Favi support}
Throughout this section, let $\mathscr{K}$ be a rigidly-compactly generated TTC, and let $\mathscr{K}^c$ denote the full subcategory of compact-rigid objects. In \cite{BF11}, Balmer and Favi construct a support datum on $\mathscr{K}$ which extends Balmer's universal support for $\mathscr{K}^c$. The key difference, from our point of view, between BIK supports and Balmer-Favi supports is that constructing Balmer-Favi supports does not make use of an auxiliary ring action. Instead, supports are constructed based on certain tensor idempotents which themselves are a tensor triangular abstraction of Rickard's idempotent modules from \cite{Ric97}. Another difference is that the space of supports is always the Balmer spectrum $\Spc(\mathscr{K}^c)$ of the compact subcategory, which is not necessarily the case for BIK supports. Throughout this section only we impose the additional hypothesis that $\Spc(\mathscr{K}^c)$ is Noetherian. One can actually get away with a weaker condition, namely assuming that $\Spc(\mathscr{K}^c)$ is weakly Noetherian (cf. \cite[Section 1]{BHS21}). However, since all Balmer spectra we are concerned with are Noetherian, working under a Noetherian hypothesis simplifies the exposition. The driving force behind the scenes is again Bousfield localization, which guarantees that for every specialization closed subset $W \subset \Spc(\mathscr{K}^c)$, one can construct two $\otimes$-idempotents $E(W) \cong E(W) \otimes E(W)$ and $F(W) \cong F(W) \otimes F(W)$ in $\mathscr{K}$ that fit into a distinguished triangle 
$$
E(W) \to \unit \to F(W) \to \Sigma E(W). 
$$ 
The Noetherian hypothesis on $\Spc(\mathscr{K}^c)$ implies that every point $\mathscr{P} \in \Spc(\mathscr{K}^c)$ is visible in the sense of \cite[Section 7.9]{BF11}. From this, \cite[Lemma 7.8]{BF11} gives that one can express each $\mathscr{P} \in \Spc(\mathscr{K}^c)$ as $\mathscr{P} = Y_1 \cap Y_2^c$ for specialization closed subsets $Y_1, Y_2 \subseteq \Spc(\mathscr{K}^c)$. One then defines a $\otimes$-idempotent $g(\mathscr{P}) := E(Y_1) \otimes F(Y_2)$ that depends only on $\mathscr{P}$ and not the choice of specialization closed subsets. For an object $M \in \mathscr{K}$, the Balmer-Favi support of $M$ is defined to be 
$$
\Supp_{\text{BF}}(M) := \{\mathscr{P} \in \Spc(\mathscr{K}^c) \ | \ M \otimes g(\mathscr{P}) \neq 0\}. 
$$
That $(\Supp_{\text{BF}}(-), \Spc(\mathscr{K}^c))$ is a support datum on $\mathscr{K}$ that extends $(\Spc(\mathscr{K}^c), \supp_{\text{Bal}}(-))$ is the content of \cite[Prop. 7.18]{BF11}.


\subsection{tt-stratification via Balmer-Favi supports}
Balmer and Favi's original motivation that led to the definition of Balmer-Favi support was to transport methods from modular representation theory to algebraic geometry. A latent application however, realized by Barthel, Heard, and Sanders in \cite{BHS21}, is a stratification theory distinct from the theory developed by BIK which frees one from the burdensome requirement of a ring action, and which is universal in Noetherian situations. Again, the stratification framework begins with defining maps that allow one to pass between $\otimes$-ideal localizing subcategories of $\mathscr{K}$ and arbitrary subsets of $\Spc(\mathscr{K}^c)$. To that end, given a $\otimes$-ideal localizing subcategory $\mathscr{C}$ of $\mathscr{K}$, set 
$$
\sigma(\mathscr{C}) = \bigcup_{M \in \mathscr{C}} \Supp_{\text{BF}}(M). 
$$
Then, given a subset $V$ of $\Spc(\mathscr{K}^c)$, set 
$$
\tau(V) = \{M \in \mathscr{K} \ | \ \Supp_{\text{BF}}(M) \subseteq V\}. 
$$

The relevant conditions on Balmer-Favi supports which allow for tensor triangular classifications are the following. 

\begin{itemize} 

\item[(2.9.1)] \emph{The tt local-to-global principle}: for each object $M$ in $\mathscr{K}$, 
\begin{center} 
$\Loc_{\otimes}\langle \{M \} \rangle = \Loc_{\otimes}\langle \{M \otimes g(\mathscr{P}) \ | \ \mathscr{P} \in \Spc(\mathscr{K}^c)\} \rangle$. 
\end{center}

\item[(2.9.2)] \emph{The tt minimality condition}: for each $\mathscr{P} \in \Spc(\mathscr{K}^c)$, the subcategory $\Loc_{\otimes}\langle \{g(\mathscr{P})\} \rangle$ is a minimal $\otimes$-ideal localizing subcategory of $\mathscr{K}$. 

\end{itemize}

When conditions (2.9.1) and (2.9.2) hold, $\mathscr{K}$ is said to be \emph{tt-stratified}, and \cite[Theorem 4.1]{BHS21} gives the classification. We record a condensed version here for completeness. 
\begin{theorem}
If $\mathscr{K}$ is tt-stratified, then the maps $\sigma$ and $\tau$ provide bijections between the set of $\otimes$-ideal localizing subcategories of $\mathscr{K}$ and subsets of $\Spc(\mathscr{K}^c)$: 
$$
\{\text{$\otimes$-ideal localizing subcategories of } \mathscr{K}\} \xleftrightarrow[\tau]{\sigma} \{\text{subsets of } \Supp_{\text{BF}} \mathscr{K}\}.
$$

\end{theorem}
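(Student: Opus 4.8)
The plan is to obtain this statement as the condensed special case of \cite[Theorem 4.1]{BHS21}, so strictly speaking nothing new needs to be proved; for orientation I outline the argument underlying that result. First I would observe that $\sigma$ and $\tau$ are both inclusion-preserving, that $\tau(V)$ is a $\otimes$-ideal localizing subcategory of $\mathscr{K}$ for every $V \subseteq \Spc(\mathscr{K}^c)$, and that $\sigma(\mathscr{C}) \subseteq \Supp_{\text{BF}}(\mathscr{K})$; all three are immediate from the support-datum axioms (2.3.1)--(2.3.5) for $\Supp_{\text{BF}}(-)$ established in \cite[Prop.~7.18]{BF11}. Next I would record the computation $\Supp_{\text{BF}}(g(\mathscr{P})) = \{\mathscr{P}\}$, which follows from the idempotence $g(\mathscr{P}) \otimes g(\mathscr{P}) \cong g(\mathscr{P}) \neq 0$ together with the orthogonality $g(\mathscr{P}) \otimes g(\mathscr{Q}) = 0$ for $\mathscr{P} \neq \mathscr{Q}$. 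This already gives $\sigma(\tau(V)) = V$: the inclusion $\subseteq$ is formal, and for $\supseteq$ one notes that $\mathscr{P} \in V$ implies $g(\mathscr{P}) \in \tau(V)$, hence $\mathscr{P} \in \sigma(\tau(V))$.

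The substantive point is the identity $\tau(\sigma(\mathscr{C})) = \mathscr{C}$, where $\mathscr{C} \subseteq \tau(\sigma(\mathscr{C}))$ is again formal. For the reverse inclusion I would take $M \in \tau(\sigma(\mathscr{C}))$, so $\Supp_{\text{BF}}(M) \subseteq \sigma(\mathscr{C})$, and argue that $M \otimes g(\mathscr{P}) \in \mathscr{C}$ for every $\mathscr{P} \in \Spc(\mathscr{K}^c)$. If $\mathscr{P} \notin \Supp_{\text{BF}}(M)$ then $M \otimes g(\mathscr{P}) = 0 \in \mathscr{C}$. If $\mathscr{P} \in \Supp_{\text{BF}}(M) \subseteq \sigma(\mathscr{C})$, choose $N \in \mathscr{C}$ with $\mathscr{P} \in \Supp_{\text{BF}}(N)$; then $M \otimes g(\mathscr{P})$ and $N \otimes g(\mathscr{P})$ are both nonzero objects of $\Loc_{\otimes}\langle g(\mathscr{P})\rangle$ (using idempotence of $g(\mathscr{P})$), so the tt minimality condition (2.9.2) forces $\Loc_{\otimes}\langle M \otimes g(\mathscr{P})\rangle = \Loc_{\otimes}\langle g(\mathscr{P})\rangle = \Loc_{\otimes}\langle N \otimes g(\mathscr{P})\rangle$, whence $M \otimes g(\mathscr{P}) \in \Loc_{\otimes}\langle N \otimes g(\mathscr{P})\rangle \subseteq \mathscr{C}$ since $N \in \mathscr{C}$ and $\mathscr{C}$ is a $\otimes$-ideal localizing subcategory. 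Having all the pieces $M \otimes g(\mathscr{P})$ in $\mathscr{C}$, the tt local-to-global principle (2.9.1) yields $\Loc_{\otimes}\langle M\rangle = \Loc_{\otimes}\langle M \otimes g(\mathscr{P}) : \mathscr{P} \in \Spc(\mathscr{K}^c)\rangle \subseteq \mathscr{C}$, so $M \in \mathscr{C}$.

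Combining the two identities established above with the fact that $\sigma$ and $\tau$ preserve inclusions shows that they restrict to mutually inverse, order-preserving bijections between the $\otimes$-ideal localizing subcategories of $\mathscr{K}$ and the subsets of $\Supp_{\text{BF}}(\mathscr{K})$, which is the assertion. The only genuinely nontrivial step is $\tau(\sigma(\mathscr{C})) = \mathscr{C}$, and the hard part there is seeing why the two defining conditions of tt-stratification suffice: minimality is exactly what pushes each $M \otimes g(\mathscr{P})$ into $\mathscr{C}$, and the local-to-global principle is exactly what reassembles these localizations into $M$ itself. Since in the present paper the theorem is invoked only as background, I would not develop this further beyond the citation to \cite[Theorem 4.1]{BHS21}.
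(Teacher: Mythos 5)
The paper itself gives no proof of this theorem: it simply states it as a condensed version of \cite[Theorem 4.1]{BHS21} and defers to that source, which is exactly what you do. Your outline of the underlying argument (minimality pushes each $M \otimes g(\mathscr{P})$ into $\mathscr{C}$, the local-to-global principle reassembles them into $M$) is correct and faithfully reflects the BHS proof.
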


\subsection{Homological support}
The setup is again the same. Namely, we work in the context where $\mathscr{K}$ is a rigidly-compactly generated TTC, and the full subcategory of compact-rigid objects is denoted $\mathscr{K}^c$. Motivated by a desire for abstract nilpotence theorems in TTCs, Balmer defined a topological space called homological spectrum $\Spc^{\text{h}}(\mathscr{K}^c)$ (cf. \cite{Bal20}). A condensed review of the construction is as follows. Let $\mathbf{Ab}$ denote the category of abelian groups. The category $\Mod$-$\mathscr{K}^c$ of right $\mathscr{K}^c$-modules is the category whose objects consist of additive functors $M: (\mathscr{K}^c)^{\text{op}} \to \mathbf{Ab}$, and whose morphisms consist of natural transformations between functors. The module category $\Mod$-$\mathscr{K}^c$ is an abelian category and receives $\mathscr{K}^c$ via the Yoneda embedding which we denote by 
\begin{align*}
\text{h} : \ \mathscr{K}^c &\hookrightarrow \Mod\text{-}\mathscr{K}^c =  \Add((\mathscr{K}^c)^{\text{op}}, \mathbf{Ab})\\
M &\mapsto \hat{M} := \Hom_{\mathscr{K}^c}(-,M) \\
f &\mapsto \hat{f}.
\end{align*}
Let $\mathscr{A}$ denote $\Mod$-$\mathscr{K}^c$. Day convolution gives $\mathscr{A}$ a tensor structure which is colimit-preserving in each variable and which makes $\text{h}$ a tensor functor. The tensor subcategory $\mathscr{A}^{\text{fp}}:=\text{mod-}\mathscr{K}^c \subseteq \mathscr{A}$ of finitely presented objects is the Freyd envelope of $\mathscr{K}^c$. A homological prime of $\mathscr{K}^c$ is defined to be a maximal, proper, Serre $\otimes$-ideal subcategory $\mathscr{B} \subseteq \mathscr{A}^{\text{fp}}$, and the homological spectrum of $\mathscr{K}^c$ is defined to be the set of homological primes: 
$$
\Spc^{\text{h}}(\mathscr{K}^c) := \{\mathscr{B} \subseteq \mathscr{A}^{\text{fp}} \ | \ \mathscr{B} \text{ is a maximal Serre tensor ideal subcategory}\}. 
$$
One can define a support datum on $\mathscr{K}^c$ in terms of the homological spectrum by defining the homological support of on object $M$ in $\mathscr{K}^c$ as 
$$
\supp^{\text{h}}(M) := \{\mathscr{B} \in \Spc^{\text{h}}(\mathscr{K}^c) \ | \ \hat{M} \notin \mathscr{B}\}. 
$$
One can view the collection $\supp^{\text{h}}(M)$ as $M$ ranges over all objects of $\mathscr{K}^c$ as a basis for the closed subsets of a topology on $\Spc^{\text{h}}(\mathscr{K}^c)$. The universality of the Balmer spectrum and Balmer supports for objects of $\mathscr{K}^c$ implies the existence of a unique continuous map $\phi: \Spc^{\text{h}}(\mathscr{K}^c) \to \Spc(\mathscr{K}^c)$ called the comparison map. It is known to be surjective assuming the rigidity of $\mathscr{K}^c$. In all known examples, the comparison map is a bijection. This leads to the following. 
\begin{conjecture}[Nerves-of-Steel]
Let $\mathscr{K}^c$ be rigid. The comparison map 
$$
\phi: \Spc^{\text{h}}(\mathscr{K}^c) \to \Spc(\mathscr{K}^c) 
$$
is a bijection. 
\end{conjecture}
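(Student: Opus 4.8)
The conjecture is, in full generality, a well-known open problem, so what I propose to establish is its assertion for the categories $\Stab(\mathcal{C}_{(\fg,\fg_{\bar 0})})$ studied in this paper, and the plan is to deduce it from a stratification statement rather than to analyze the comparison map directly. The first reduction is routine. When $\mathscr{K}^c$ is rigid the comparison map $\phi: \Spc^{\text{h}}(\mathscr{K}^c)\to\Spc(\mathscr{K}^c)$ is already known to be continuous and surjective, and by construction it is compatible with the support data in the sense that $\supp^{\text{h}}(M)=\phi^{-1}(\supp_{\text{Bal}}(M))$ for every compact $M$; hence it suffices to prove that $\phi$ is injective, and then, since $\phi$ already matches up the two closed-set bases, it is automatically a homeomorphism. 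Two distinct points in a fibre of $\phi$ cannot be distinguished by Balmer supports of compact objects, so any separation of them must come from the big category, i.e. from the Balmer-Favi and homological supports for big objects reviewed above.

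The key input I would import is the general principle of \cite{BHSZ24}: if $\mathscr{K}$ is h-stratified then the comparison map $\phi$ is automatically a bijection. The mechanism is that h-stratification makes homological support classify the $\otimes$-ideal localizing subcategories of $\mathscr{K}$ by arbitrary subsets of $\Spc^{\text{h}}(\mathscr{K}^c)$; the same subcategories are also classified, via Balmer-Favi support, by subsets of $\Spc(\mathscr{K}^c)$ once $\mathscr{K}$ is tt-stratified; and these two classifications are intertwined by $\phi$, which forces $\phi$ to be injective, since no two distinct homological primes can then index the same minimal localizing $\otimes$-ideal. Granting Theorem A --- which asserts precisely that $\Stab(\mathcal{C}_{(\fg,\fg_{\bar 0})})$ is tt-stratified and that tt-stratification there is equivalent to h-stratification --- the conjecture follows for these categories. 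A more computational alternative, the route taken in \cite{HN24} for $\glmn$, is to compute $\Spc^{\text{h}}(\stab(\mathcal{F}_{(\fg,\fg_{\bar 0})}))$ directly and to check that $\phi$ realizes an identification of it with the space $N\text{-}\Proj(\text{H}^{\bullet}(\fz,\fz_{\bar 0};\mathbb{C}))$ computed by Boe, Kujawa, and Nakano.

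With the reduction in hand, the substance lies in establishing h-stratification, which I would carry out by descent along the template of \cite{BIK} for finite groups. First one proves the stratification statement for the splitting detecting subalgebra $\fz$, where \cite{HN24} reduces the problem to the stable module category of the exterior algebra $\Lambda^{\bullet}(\fz_{\bar 1})$ regarded as a superalgebra with its generators declared odd, and a BGG-type equivalence settles it; one also records the analogous statement for the rank-one subalgebras isomorphic to $\mathfrak{q}(1)$. One then bootstraps from $\fz$ and these copies of $\mathfrak{q}(1)$ up to $\fg$ using a Quillen-type stratification theorem together with a Chouinard-type descent argument, which are available precisely because h-stratification --- unlike BIK stratification --- behaves well under the change-of-category functors involved (including passage through the homotopy category of injectives); the realization hypothesis of Assumption \ref{assumption} is exactly what guarantees that supports are transported faithfully along these functors. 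The step I expect to be the main obstacle is the verification of the minimality condition after descent: showing that each local object $g(\mathscr{P})$ --- equivalently, each homological residue field --- generates a minimal $\otimes$-ideal localizing subcategory of $\Stab(\mathcal{C}_{(\fg,\fg_{\bar 0})})$. Because these categories are not monogenic one cannot simply invoke the monogenic minimality arguments of \cite{BIK} or \cite{BHS21}, and must instead control arbitrary $\otimes$-ideals directly; this is where the reduction to the detecting subalgebra and the structure of the finite group $N$ carry the load.
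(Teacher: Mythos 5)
Your primary argument is circular within the logical architecture of this paper. You propose to deduce the Nerves-of-Steel Conjecture for $\Stab(\mathcal{C}_{(\fg,\fg_{\bar 0})})$ from Theorem A (tt-stratification) via Theorem \ref{nos}. But the proof of Theorem A in the paper \emph{opens} by invoking the Nerves-of-Steel statement of Theorem \ref{noshn}: it is a hypothesis of Theorem \ref{nos} (which says that tt-stratification is equivalent to h-stratification \emph{together with} Nerves-of-Steel, not that h-stratification implies the conjecture on its own), and it is also explicitly one of the hypotheses of the descent theorem, Theorem \ref{descent}, which the proof then uses to establish h-stratification. Your own bootstrapping sketch --- Quillen-type stratification together with Chouinard-type descent from $\fz$ and the copies of $\mathfrak{q}(1)$ up to $\fg$ --- is exactly the step packaged by Theorem \ref{descent}, so you cannot carry it out without already possessing the bijectivity of $\phi$ that you are trying to prove. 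The misstatement ``if $\mathscr{K}$ is h-stratified then the comparison map $\phi$ is automatically a bijection'' is the crux of the problem: Theorem \ref{nos} treats h-stratification and Nerves-of-Steel as two independent conditions whose conjunction is tt-stratification, and does not allow you to extract one from the other.

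The paper in fact does not prove this conjecture as part of its main argument; it states it in full generality as an open problem and imports the verified Lie-superalgebra case, Theorem \ref{noshn}, from \cite{HN24}. That result is logically \emph{prior} to stratification and makes no use of it: for each $x\in\fz_{\bar 1}$ one shows directly that $\Stab(\mathcal{C}_{(\langle x\rangle,\langle x\rangle_{\bar 0})})$ is a tensor triangular field, that the restriction functors $\pi^{\fg}_x$ jointly detect nilpotence, and one combines this with the realization hypothesis of Assumption \ref{assumption} to identify $\Spc^{\text{h}}(\stab(\mathcal{F}_{(\fg,\fg_{\bar 0})}))$ with $N\text{-}\Proj(\text{H}^{\bullet}(\fz,\fz_{\bar 0};\mathbb{C}))$, on which $\phi$ is manifestly a bijection. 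You do mention this as ``a more computational alternative,'' but in the present framework it is not an alternative --- it is the only non-circular route, and the stratification theorems that follow are consumers of the conjecture, not producers of it.
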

The homological support for objects in $\mathscr{K}^c$ can be extended to a support datum on $\mathscr{K}$ via the following construction. From \cite[Section 3]{BKS19} there is a pure-injective object $E_{\mathscr{B}}$ in $\mathscr{K}$ corresponding to each homological prime $\mathscr{B} \in \Spc^{\text{h}}(\mathscr{K}^c)$. For objects $M$ in $\mathscr{K}$, the extended homological support is defined as
$$
\Supp^{\text{h}}(M) := \{\mathscr{B} \in \Spc^{\text{h}}(\mathscr{K}^c) \ | \ \textsf{hom}(M, E_{\mathscr{B}}) \neq 0\}, 
$$
where $\textsf{hom}(-,-)$ denotes the internal hom in $\mathscr{K}$.

\subsection{Stratification via homological support}
In recent work, Barthel, Heard, Sanders, and Zou \cite{BHSZ24} developed a notion of stratification in terms of the homological spectrum and homological support. This theory of stratification, called homological stratification or h-stratification, has the advantage of satisfying a very general form of descent. Let $\mathscr{K}$ be a rigidly-compactly generated TTC, and let $\mathscr{K}^c$ denote the full subcategory of compact-rigid objects. Homological support determines natural maps $\sigma$ and $\tau$ between $\otimes$-ideal localizing subcategories of $\mathscr{K}$ and subsets of $\Spc^{\text{h}}(\mathscr{K}^c)$. The maps are constructed in a similar way to how they are constructed for tt-stratification. Given a $\otimes$-ideal localizing subcategory $\mathscr{C}$ of $\mathscr{K}$, set 
$$
\sigma(\mathscr{C}) = \bigcup_{M \in \mathscr{C}} \Supp^{\text{h}}(M). 
$$
Then, given a subset $V$ of $\Spc(\mathscr{K}^c)$, set 
$$
\tau(V) = \{M \in \mathscr{K} \ | \ \Supp^{\text{h}}(M) \subseteq V\}. 
$$
As usual $\mathscr{K}$ is said to be homologically stratified if these maps give mutually inverse bijections. The theorem that gives sufficient and necessary conditions for homological stratification is given in \cite{BHSZ24}. We record a slightly modified form of the theorem here. 
\begin{theorem}
The TTC $\mathscr{K}$ is homologically stratified if the following conditions hold. 

\begin{itemize} 

\item[(2.11.1)] \emph{The homological local-to-global principle}: for each object $M$ in $\mathscr{K}$, 
\begin{center} 
$\Loc_{\otimes}\langle \{M \} \rangle = \Loc_{\otimes}\langle \{M \otimes E_{\mathscr{B}}) \ | \ \mathscr{B} \in \Spc^{\text{h}}(\mathscr{K}^c)\} \rangle$. 
\end{center}

\item[(2.11.2)] \emph{The homological minimality condition}: for each $\mathscr{B} \in \Spc^{\text{h}}(\mathscr{K}^c)$, the subcategory $\Loc_{\otimes}\langle \{E_{\mathscr{B}}\} \rangle$ is a minimal $\otimes$-ideal localizing subcategory of $\mathscr{K}$. 

\end{itemize}

\end{theorem}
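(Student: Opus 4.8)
The plan is to deduce this as the sufficiency half of the main classification theorem of Barthel--Heard--Sanders--Zou in \cite{BHSZ24}; the argument is the homological analogue of the template already used above for BIK- and tt-stratification, with $(\Supp^{\text{h}}(-),\Spc^{\text{h}}(\mathscr{K}^c))$ playing the role of the support datum. Since that pair was recorded above as an honest support datum on $\mathscr{K}$ extending Balmer support, the maps $\sigma$ and $\tau$ are order-preserving and one automatically has $\mathscr{C}\subseteq\tau\sigma(\mathscr{C})$ and $\sigma\tau(V)\subseteq V$ for every $\otimes$-ideal localizing subcategory $\mathscr{C}$ and every subset $V\subseteq\Spc^{\text{h}}(\mathscr{K}^c)$. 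The whole content is to promote these containments to equalities using (2.11.1) and (2.11.2).

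First I would isolate the two structural facts about the pure-injective objects $E_{\mathscr{B}}$ of \cite{BKS19} that the argument rests on: (a) $\Supp^{\text{h}}(E_{\mathscr{B}})=\{\mathscr{B}\}$; and (b) for each $M\in\mathscr{K}$ the object $M\otimes E_{\mathscr{B}}$ occurring in (2.11.1) vanishes exactly when $\mathscr{B}\notin\Supp^{\text{h}}(M)$, and is a generator of $\Loc_{\otimes}\langle E_{\mathscr{B}}\rangle$ otherwise. Granting (b), the minimality hypothesis (2.11.2) says precisely that every nonzero object of $\Loc_{\otimes}\langle E_{\mathscr{B}}\rangle$ generates it.

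With these in hand the two halves of the bijection are essentially formal. For $\tau\sigma(\mathscr{C})\subseteq\mathscr{C}$: given $M$ with $\Supp^{\text{h}}(M)\subseteq\sigma(\mathscr{C})$ and $\mathscr{B}\in\Supp^{\text{h}}(M)$, choose $N\in\mathscr{C}$ with $\mathscr{B}\in\Supp^{\text{h}}(N)$; then $N\otimes E_{\mathscr{B}}$ is a nonzero object of $\mathscr{C}$ (as $\mathscr{C}$ is a $\otimes$-ideal), so by (b) and (2.11.2) we get $\Loc_{\otimes}\langle E_{\mathscr{B}}\rangle\subseteq\mathscr{C}$, and in particular $M\otimes E_{\mathscr{B}}\in\mathscr{C}$; this holds for all $\mathscr{B}$ (the remaining $\mathscr{B}$ contribute zero objects), so (2.11.1) forces $M\in\mathscr{C}$. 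For $\sigma\tau(V)\supseteq V$: if $\mathscr{B}\in V$ is such that $\Loc_{\otimes}\langle E_{\mathscr{B}}\rangle\neq 0$, then $E_{\mathscr{B}}\in\tau(V)$ by (a), whence $\mathscr{B}\in\sigma\tau(V)$; points with $\Loc_{\otimes}\langle E_{\mathscr{B}}\rangle=0$ never enter any $\sigma(\mathscr{C})$, so they cause no trouble.

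The step I expect to be the genuine obstacle is fact (b), and within it the compatibility between the tensor-theoretic local-to-global principle (2.11.1) and the $\textsf{hom}(-,E_{\mathscr{B}})$-definition of $\Supp^{\text{h}}$, together with the claim that off its vanishing locus $M\otimes E_{\mathscr{B}}$ generates exactly the minimal block $\Loc_{\otimes}\langle E_{\mathscr{B}}\rangle$. This is the heart of what \cite{BHSZ24} establishes, using the pure-injectivity of $E_{\mathscr{B}}$ and the structure of the Freyd envelope, and in a write-up one would either cite their theorem directly or reproduce that lemma; everything else is the support-datum bookkeeping common to all three stratification frameworks. Since we only need the sufficient direction, there is no need to discuss faithfulness of $\Supp^{\text{h}}$ or the Nerves-of-Steel comparison map.
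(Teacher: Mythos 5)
The paper provides no proof of this theorem: the surrounding text states the result ``is given in \cite{BHSZ24}'' and only records ``a slightly modified form,'' so there is no argument in the paper to compare against. Your sketch is a reasonable unpacking, reducing the statement to the structural facts about the pure-injectives $E_{\mathscr{B}}$ from \cite{BKS19}, and you correctly identify the one genuinely nontrivial ingredient --- the equivalence $M \otimes E_{\mathscr{B}} = 0 \Leftrightarrow \textsf{hom}(M, E_{\mathscr{B}}) = 0$, reconciling the tensor form of (2.11.1) with the $\textsf{hom}$-based definition of $\Supp^{\text{h}}$ --- as the place where one must cite \cite{BHSZ24} rather than rely on generic support-datum bookkeeping.

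One small correction: your dismissal of the case $\Loc_{\otimes}\langle E_{\mathscr{B}} \rangle = 0$ is not quite right as written. If such a $\mathscr{B}$ belonged to $V$, then $\mathscr{B}$ would never lie in $\sigma\tau(V)$, so the inclusion $V \subseteq \sigma\tau(V)$ would genuinely fail; these points \emph{would} cause trouble. What actually saves the argument is that the case is vacuous: each homological prime $\mathscr{B}$ is a \emph{proper} Serre $\otimes$-ideal of $\mathscr{A}^{\mathrm{fp}}$, so the Gabriel quotient $\mathscr{A}^{\mathrm{fp}}/\mathscr{B}$ is nonzero and the associated pure-injective $E_{\mathscr{B}}$ is nonzero by construction. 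It would be cleaner to state that the $E_{\mathscr{B}}$ are always nonzero than to claim the degenerate points are harmless.
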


\subsection{Relationships between stratification theories, pros and cons, etc.}
A developing theme in the area involves the comparison between the various notions of stratification and the implications that allow one to pass from one notion of stratification to another. As we will see sometimes it is relatively straightforward to obtain results for a TTC for a particular stratification theory, but for one reason or another it is convenient to be able to transfer the results to the other notions where other results can be applied to say something new. First let us consider the pros and cons. 

\begin{center}
\begin{tabular}{ | c | c | c | }
\hline
Type of stratification & Pros & Cons \\ 
\hline
\multirow{2}{4em}{BIK} & $\bullet$ Neeman's theorem to work with & $\bullet$ Requires ring action \\
& $\bullet$ Many results in the literature available & $\bullet$ $\End^{\bullet}(\unit)$ may not help\\ 
\hline 
\multirow{2}{4em}{Tensor triangular} & $\bullet$ In terms of $\Spc(\mathscr{K}^c)$& $\bullet$ Need to understand $\Spc(\mathscr{K}^c)$ \\
& $\bullet$ No ring action required & $\bullet$ No general form of descent  \\ 
\hline
\multirow{2}{4em}{Homological} & $\bullet$ $\Spc^{\text{h}}(\mathscr{K}^c)$ often concrete& $\bullet$ Often need Nerves-of-Steel \\
& $\bullet$ Very general form of descent & $\bullet$ Local-global principle not trivial  \\ 
\hline

\end{tabular}

\end{center}

The first implication result we want to highlight \cite[Theorem D]{BHS21} which demonstrates the universality of tt-stratification in Noetherian situations. This can be interpreted as ``BIK stratification implies tt-stratification". 
\begin{theorem}
Let $\mathscr{K}$ be a rigidly-compactly generated TTC which is Noetherian and stratified in the sense of BIK by the action of a graded-commutative Noetherian ring $R$. Then the BIK space of supports $\Supp_{\text{BIK}}(\mathscr{K})$ is canonically homeomorphic to $\Spc(\mathscr{K}^c)$ and the BIK notion of support coincides with the Balmer-Favi notion of support. 
\end{theorem}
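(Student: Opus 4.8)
The plan is to realize Balmer's canonical comparison map explicitly and to bootstrap from compact objects up to all of $\mathscr{K}$. Write $X := \Supp_{\text{BIK}}(\mathscr{K})$ with the subspace topology inherited from $\Proj R$; since $R$ is Noetherian, $X$ is a Noetherian space. For a compact object $M$ the set $\Supp_{\text{BIK}}(M)$ is closed in $X$ (by \cite[Ch.~3]{BIK} it equals $\supp_R(\End^{\bullet}_{\mathscr{K}}(M)) \cap X$), so $M \mapsto \Supp_{\text{BIK}}(M)$ restricts to a closed-valued support datum on $\mathscr{K}^c$ with values in $X$. By the universality of Balmer support recalled in Section~2.4, there is a unique continuous map $f\colon X \to \Spc(\mathscr{K}^c)$ with $\Supp_{\text{BIK}}(M) = f^{-1}(\supp_{\text{Bal}}(M))$ for every $M \in \mathscr{K}^c$. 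The theorem then amounts to showing that $f$ is a homeomorphism and that it intertwines $\Supp_{\text{BIK}}(-)$ and $\Supp_{\text{BF}}(-)$ on all of $\mathscr{K}$.

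\emph{That $f$ is a homeomorphism.} I would show $(X, \Supp_{\text{BIK}}(-)|_{\mathscr{K}^c})$ is a \emph{classifying} support datum and then invoke Balmer's reconstruction theorem \cite{Bal05}. By Neeman--Thomason localization theory the thick $\otimes$-ideals of $\mathscr{K}^c$ are in inclusion-preserving bijection with the compactly generated localizing $\otimes$-ideals of $\mathscr{K}$ (via $\mathscr{I} \mapsto \Loc_{\otimes}\langle \mathscr{I}\rangle$ and $\mathscr{C} \mapsto \mathscr{C} \cap \mathscr{K}^c$), while by BIK stratification the localizing $\otimes$-ideals of $\mathscr{K}$ are in inclusion-preserving bijection with arbitrary subsets of $X$. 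It remains to match up the two distinguished sub-posets: a compactly generated localizing $\otimes$-ideal is the union of the Balmer supports of its compact objects, hence has specialization-closed support; conversely, for specialization-closed $W \subseteq X$ the structure theory of \cite{BIK} exhibits $\tau(W)$ as $\Loc_{\otimes}\langle M \in \mathscr{K}^c : \Supp_{\text{BIK}}(M) \subseteq W\rangle$, hence as compactly generated. Since $X$ is Noetherian, specialization-closed subsets coincide with Thomason subsets, so $\Supp_{\text{BIK}}(-)|_{\mathscr{K}^c}$ classifies the thick $\otimes$-ideals of $\mathscr{K}^c$ by Thomason subsets of $X$; Balmer's reconstruction theorem then gives that $f$ is a homeomorphism, which is the first assertion.

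\emph{Agreement of the two supports.} Fix $\mathfrak{p} \in X$ and set $\mathscr{P} := f(\mathfrak{p})$. Because $\Proj R$ and $\Spc(\mathscr{K}^c)$ are Noetherian, both local-to-global principles are available: the BIK principle (2.7.1) holds since $\Proj R$ has finite Krull dimension, and the tt principle (2.9.1) holds in the Noetherian setting \cite{BHS21}. Using the compact case from the previous step one computes $\Supp_{\text{BIK}}(g(\mathscr{P})) = \{\mathfrak{p}\}$ and $\Supp_{\text{BF}}(\Gamma_{\mathfrak{p}}\unit) = \{\mathscr{P}\}$; feeding these singletons into the two local-to-global principles yields the inclusions $\Loc_{\otimes}\langle g(\mathscr{P})\rangle \subseteq \Loc_{\otimes}\langle \Gamma_{\mathfrak{p}}\unit\rangle$ and $\Loc_{\otimes}\langle \Gamma_{\mathfrak{p}}\unit\rangle \subseteq \Loc_{\otimes}\langle g(\mathscr{P})\rangle$, so that $\Gamma_{\mathfrak{p}}\mathscr{K} = \Loc_{\otimes}\langle \Gamma_{\mathfrak{p}}\unit\rangle = \Loc_{\otimes}\langle g(\mathscr{P})\rangle$ (the first equality being the minimality (2.7.2) of $\Gamma_{\mathfrak{p}}\mathscr{K}$ together with $\Gamma_{\mathfrak{p}}\unit \neq 0$). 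Consequently $M \otimes g(\mathscr{P}) \neq 0 \iff \Gamma_{\mathfrak{p}}M \neq 0$ for every $M \in \mathscr{K}$, and since $f$ is a bijection this says precisely that $\Supp_{\text{BF}}(M) = f(\Supp_{\text{BIK}}(M))$, i.e.\ the BIK and Balmer--Favi supports agree under the homeomorphism $f$.

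\emph{Expected main obstacle.} The crux is the third step — establishing that $\Gamma_{\mathfrak{p}}\mathscr{K}$ and $\Loc_{\otimes}\langle g(\mathscr{P})\rangle$ generate the same localizing $\otimes$-ideal; it is exactly here that the minimality half of the BIK stratification hypothesis and the Balmer--Favi structure theory for the idempotents $g(\mathscr{P})$ are both needed, together with the computations of $\Supp_{\text{BIK}}(g(\mathscr{P}))$ and $\Supp_{\text{BF}}(\Gamma_{\mathfrak{p}}\unit)$. The identification of the underlying spaces, by contrast, is a comparatively formal consequence of the stratification classification, Balmer's reconstruction theorem, and the Noetherian hypothesis; the one genuinely nonformal input there is that specialization-closed subsets of $X$ correspond to compactly generated localizing $\otimes$-ideals, whose ``specialization-closed $\Rightarrow$ compactly generated'' direction relies on the machinery of \cite{BIK} rather than on soft arguments.
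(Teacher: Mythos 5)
This statement is cited in the paper as \cite[Theorem~D]{BHS21} and no proof is given in the paper; it is simply recalled as part of the background on stratification theories, so there is no ``paper's own proof'' to compare your argument against.

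That said, your sketch is the right sort of argument, and the first two steps are sound. The universality of Balmer support does give the continuous map $f\colon \Supp_{\text{BIK}}(\mathscr{K}) \to \Spc(\mathscr{K}^c)$ once one knows that BIK support is closed on compacts under the Noetherian hypothesis, and the identification of $f$ as a homeomorphism via Neeman--Thomason localization, BIK's classification of $\otimes$-ideal localizing subcategories, and Balmer's reconstruction theorem is correct. The one place I would push back is in the third step. You write that ``using the compact case from the previous step one computes $\Supp_{\text{BIK}}(g(\mathscr{P})) = \{\mathfrak{p}\}$ and $\Supp_{\text{BF}}(\Gamma_{\mathfrak{p}}\unit) = \{\mathscr{P}\}$,'' but neither $g(\mathscr{P})$ nor $\Gamma_{\mathfrak{p}}\unit$ is compact, so these two equalities do not follow from the compact comparison without further work; in fact they are very close to the statement you are trying to prove. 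The cleaner and more standard route, which avoids this circularity and renders the local-to-global detour unnecessary, is to observe that once $f$ is a homeomorphism identifying the Balmer supports of compacts with their BIK supports, the Rickard idempotents $E(W)$ and $F(W)$ of Balmer--Favi and the BIK objects $\Gamma_{f^{-1}(W)}\unit$ and $L_{f^{-1}(W)}\unit$ are both obtained by Bousfield localization at the same thick subcategory of compacts (those supported in $W$, resp.\ $f^{-1}(W)$), hence are canonically isomorphic. Then $g(\mathscr{P}) = E(Y_1) \otimes F(Y_2) \cong \Gamma_{f^{-1}(Y_1)}\unit \otimes L_{f^{-1}(Y_2)}\unit \cong \Gamma_{\mathfrak{p}}\unit$ directly, and $M \otimes g(\mathscr{P}) \cong \Gamma_{\mathfrak{p}}M$ immediately, which is the coincidence of supports. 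Your local-to-global argument can be made rigorous but one has to first establish precisely this idempotent identification (or at least the two singleton-support computations), so it ends up proving the key lemma along the way rather than circumventing it; I would recommend leading with the idempotent identification instead.

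Also note a minor point: you invoke $\Gamma_{\mathfrak{p}}\unit \neq 0$ for $\mathfrak{p} \in \Supp_{\text{BIK}}(\mathscr{K})$, which does hold, but deserves a one-line justification via the identity $\Gamma_{\mathfrak{p}}M \cong \Gamma_{\mathfrak{p}}\unit \otimes M$ from \cite{BIK}, since the paper's Section~2.7 works under a monogenicity hypothesis that the present theorem statement does not assume.
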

\par The relationships between tt-stratification and h-stratification are not as clear-cut. This has to do with the fact that h-stratification does not rely on any point-set topological conditions on the homological spectrum, so h-stratification does not have a strong universality statement available. However, in weakly Noetherian situations, and when the Nerves-of-Steel Conjecture holds, the relationship is tight, as given in \cite[Theorem E]{BHSZ24}, which we record here. 
\begin{theorem}\label{nos}
If $\mathscr{K}$ is a rigidly-compactly generated TTC with $\Spc(\mathscr{K}^c)$ weakly Noetherian, then the following are equivalent: 
\begin{itemize}
\item[(a)] $\mathscr{K}$ is tt-stratified; 
\item[(b)] $\mathscr{K}$ is h-stratified and the Nerves-of-Steel Conjecture holds for $\mathscr{K}$. 
\end{itemize}
\end{theorem}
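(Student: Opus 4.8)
The plan is to run both directions through the comparison map $\phi \colon \Spc^{\text{h}}(\mathscr{K}^c) \to \Spc(\mathscr{K}^c)$ recalled above, by first establishing a precise dictionary between Balmer--Favi support and homological support and then checking that the tt local-to-global principle (2.9.1) and tt minimality (2.9.2) correspond, under this dictionary, to their homological counterparts (2.11.1) and (2.11.2) --- with the Nerves-of-Steel bijectivity of $\phi$ being exactly the extra input that reconciles the two minimality conditions. No point-set comparison of the two topologies will be needed, since (2.9.1)--(2.9.2) and (2.11.1)--(2.11.2) involve only localizing $\otimes$-ideals together with the auxiliary objects $g(\mathscr{P})$ and $E_{\mathscr{B}}$.

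\emph{Step 1 (the support dictionary).} The technical core is to prove that, for every $M \in \mathscr{K}$,
$$
\Supp_{\text{BF}}(M) = \phi\bigl(\Supp^{\text{h}}(M)\bigr),
$$
together with the pointwise refinements $\Supp^{\text{h}}(g(\mathscr{P})) = \phi^{-1}(\mathscr{P})$ for each $\mathscr{P} \in \Spc(\mathscr{K}^c)$ and $\Supp_{\text{BF}}(E_{\mathscr{B}}) = \{\phi(\mathscr{B})\}$ for each homological prime $\mathscr{B}$; equivalently, $E_{\mathscr{B}} \otimes g(\mathscr{P}) \ne 0$ precisely when $\mathscr{B}$ lies over $\mathscr{P}$. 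This is where the weak Noetherian hypothesis on $\Spc(\mathscr{K}^c)$ is used: it makes every point visible, so the Balmer--Favi idempotents $g(\mathscr{P})$ exist and satisfy $\Supp_{\text{BF}}(g(\mathscr{P})) = \{\mathscr{P}\}$, and one then has to unwind the construction of the pure-injective $E_{\mathscr{B}}$ from the homological residue field functor at $\mathscr{B}$ to see that it interacts nontrivially with exactly the idempotent $g(\phi(\mathscr{B}))$. I expect this identification of the homological residue objects $E_{\mathscr{B}}$ with the Balmer--Favi idempotents to be the main obstacle; once it is in place, the remaining steps are formal manipulations with localizing $\otimes$-ideals, using only that $\Supp^{\text{h}}$ and $\Supp_{\text{BF}}$ are support data on $\mathscr{K}$ (so their respective ``empty-support'' and ``not containing a fixed point'' classes are localizing $\otimes$-ideals) and the standard fact that $\Supp^{\text{h}}(E_{\mathscr{B}}) = \{\mathscr{B}\}$.

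\emph{Step 2 ($\mathrm{(a)} \Rightarrow \mathrm{(b)}$).} Assume $\mathscr{K}$ is tt-stratified. Fix $\mathscr{P} \in \Spc(\mathscr{K}^c)$ and $\mathscr{B}$ over $\mathscr{P}$. Applying the tt local-to-global principle (2.9.1) to $E_{\mathscr{B}}$ and using Step 1 (which kills every factor $E_{\mathscr{B}} \otimes g(\mathscr{Q})$ with $\mathscr{Q} \ne \mathscr{P}$) gives $\Loc_{\otimes}\langle E_{\mathscr{B}} \rangle \subseteq \Loc_{\otimes}\langle g(\mathscr{P}) \rangle$; since $E_{\mathscr{B}} \ne 0$ and $\Loc_{\otimes}\langle g(\mathscr{P}) \rangle$ is minimal by (2.9.2), in fact $\Loc_{\otimes}\langle E_{\mathscr{B}} \rangle = \Loc_{\otimes}\langle g(\mathscr{P}) \rangle$, which is the homological minimality condition (2.11.2); substituting this equality back into (2.9.1), tensoring with a fixed $M$, and reindexing over $\phi$ (surjective by rigidity of $\mathscr{K}^c$) yields the homological local-to-global principle (2.11.1). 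For the Nerves-of-Steel conjecture, suppose $\mathscr{B}_1 \ne \mathscr{B}_2$ both lie over $\mathscr{P}$. By the previous paragraph $g(\mathscr{P}) \in \Loc_{\otimes}\langle E_{\mathscr{B}_1} \rangle$; but $\{X \in \mathscr{K} : \mathscr{B}_2 \notin \Supp^{\text{h}}(X)\}$ is a localizing $\otimes$-ideal containing $E_{\mathscr{B}_1}$, hence containing $\Loc_{\otimes}\langle E_{\mathscr{B}_1} \rangle$ and so $g(\mathscr{P})$ --- contradicting $\mathscr{B}_2 \in \phi^{-1}(\mathscr{P}) = \Supp^{\text{h}}(g(\mathscr{P}))$ from Step 1. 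Thus $\phi$ is injective, and being surjective it is a bijection, so Nerves-of-Steel holds for $\mathscr{K}$.

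\emph{Step 3 ($\mathrm{(b)} \Rightarrow \mathrm{(a)}$).} Assume $\mathscr{K}$ is h-stratified and Nerves-of-Steel holds, so $\phi$ is a bijection; write $\mathscr{B}_{\mathscr{P}}$ for the unique homological prime over $\mathscr{P}$. Applying the homological local-to-global principle (2.11.1) to $g(\mathscr{P})$ and using Step 1 (the only nonzero factor being $g(\mathscr{P}) \otimes E_{\mathscr{B}_{\mathscr{P}}}$) gives $\Loc_{\otimes}\langle g(\mathscr{P}) \rangle \subseteq \Loc_{\otimes}\langle E_{\mathscr{B}_{\mathscr{P}}} \rangle$; since $g(\mathscr{P}) \ne 0$ and $\Loc_{\otimes}\langle E_{\mathscr{B}_{\mathscr{P}}} \rangle$ is minimal by (2.11.2), the two coincide. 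Hence $\Loc_{\otimes}\langle g(\mathscr{P}) \rangle$ is minimal, which is the tt minimality condition (2.9.2). Tensoring the equality $\Loc_{\otimes}\langle g(\mathscr{P}) \rangle = \Loc_{\otimes}\langle E_{\mathscr{B}_{\mathscr{P}}} \rangle$ with a fixed $M$ and reindexing over the bijection $\phi$ converts (2.11.1) into the tt local-to-global principle (2.9.1); therefore $\mathscr{K}$ is tt-stratified, completing the equivalence.
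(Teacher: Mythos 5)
The paper does not give its own proof of this statement: it is recorded verbatim as a citation, namely \cite[Theorem E]{BHSZ24}, and the surrounding prose merely situates it in the taxonomy of stratification results. So there is no internal proof to compare your argument against.

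On its own terms, your outline is a reasonable reconstruction of how the equivalence is actually established in the source. Once the dictionary of Step 1 is granted, the bookkeeping in Steps 2 and 3 is sound: the local-to-global principle applied to $E_{\mathscr{B}}$ (resp.\ $g(\mathscr{P})$) collapses to a single factor, and minimality on one side upgrades the resulting containment of localizing $\otimes$-ideals to an equality, giving minimality on the other; your injectivity argument for $\phi$ via the localizing $\otimes$-ideal $\{X : \mathscr{B}_2 \notin \Supp^{\text{h}}(X)\}$ is also correct, and surjectivity of $\phi$ is already guaranteed by rigidity of $\mathscr{K}^c$. The transfer of local-to-global from one side to the other does require, as you use, the standard observation that $\{N : M \otimes N \in \Loc_{\otimes}\langle M \otimes Y\rangle\}$ is a localizing $\otimes$-ideal, which makes ``tensoring and reindexing'' legitimate.

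The genuine gap is precisely where you flag it: Step 1 is asserted, not proven, and it carries essentially all the analytic content of the theorem. The identities $\Supp_{\text{BF}}(E_{\mathscr{B}}) = \{\phi(\mathscr{B})\}$ and $\Supp^{\text{h}}(g(\mathscr{P})) = \phi^{-1}(\mathscr{P})$ rest on the interaction between the pure-injective homological residue objects $E_{\mathscr{B}}$ of \cite{BKS19} and the Balmer--Favi idempotents under the weak Noetherian hypothesis, together with the ``detection'' property that $\textsf{hom}(M, E_{\mathscr{B}}) = 0$ if and only if $M \otimes E_{\mathscr{B}} = 0$; none of these are formal. Moreover the unconditional equality $\Supp_{\text{BF}}(M) = \phi\bigl(\Supp^{\text{h}}(M)\bigr)$ that you announce for all $M$ is stronger than what the argument needs and is not obviously available without one of the two stratification hypotheses already in hand --- the two pointwise refinements you state, applied to $E_{\mathscr{B}}$ and $g(\mathscr{P})$, are what the rest of the proof actually uses, and it would be cleaner (and safer) to lean only on those. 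Since these inputs are exactly the technical lemmas developed in \cite{BHS21} and \cite{BHSZ24}, a self-contained write-up would need to either prove them or cite them precisely; as written, the proposal establishes the shape of the reduction but not the reduction itself.
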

As the original authors note, this means that when the Nerves-of-Steel Conjecture holds, it suffices to consider only h-stratification. 
\par Finally, we want to mention descent. Before h-stratification, descent results had to be verified one TTC at a time, and general methods were not available. The following theorem \cite[Theorem A]{BHSZ24} demonstrates the power of h-stratification in contexts where natural restriction functors exist. 
\begin{theorem}\label{descent}
Let $(f_i^* : \mathscr{K} \to \mathscr{S}_i)_{i \in I}$ be a family of exact, symmetric monoidal functors that preserve set-indexed coproducts and jointly detect when an object of $\mathscr{K}$ is zero. Suppose that $\mathscr{S}_i$ is tt-stratified for all $i \in I$. If $\mathscr{K}$ satisfies the Nerves-of-Steel Conjecture and has a weakly Noetherian spectrum, then the following are equivalent: 
\begin{itemize}
\item[(a)] $\mathscr{K}$ is tt-stratified
\item[(b)] $\mathscr{K}$ is h-stratified 
\item[(c)] $\mathscr{K}$ is generated by the images of the right adjoints $(f_i)_*$
\end{itemize}
\end{theorem}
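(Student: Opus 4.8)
The plan is to route everything through the homological spectrum and to dispose of $(a)\Leftrightarrow(b)$ first. Since $\Spc(\mathscr{K}^c)$ is weakly Noetherian and the Nerves-of-Steel Conjecture is assumed for $\mathscr{K}$, Theorem \ref{nos} applies verbatim and gives $(a)\Leftrightarrow(b)$ at no cost, while identifying the Balmer--Favi support on $\mathscr{K}$ with the extended homological support $\Supp^{\text{h}}$. Each $\mathscr{S}_i$, being tt-stratified, has weakly Noetherian spectrum and hence (again by Theorem \ref{nos}) is h-stratified; in particular $\Supp^{\text{h}}$ is faithful on $\mathscr{S}_i$ and both criteria $(2.11.1)$ and $(2.11.2)$ hold there. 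So it remains to prove $(b)\Leftrightarrow(c)$. As a preliminary I would establish a descent formula for homological support: $f_i^*$ sends rigid objects to rigid objects and so restricts to a tensor functor $\mathscr{K}^c\to\mathscr{S}_i^c$, inducing a map $\psi_i\colon\Spc^{\text{h}}(\mathscr{S}_i^c)\to\Spc^{\text{h}}(\mathscr{K}^c)$, and for every $M$ in $\mathscr{K}$ one should have $\Supp^{\text{h}}(M)=\bigcup_{i\in I}\psi_i\big(\Supp^{\text{h}}(f_i^*M)\big)$. Granting this, faithfulness of $\Supp^{\text{h}}$ on each $\mathscr{S}_i$ together with joint detection of zero objects forces $\Supp^{\text{h}}$ to be faithful on $\mathscr{K}$ as well.

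For $(b)\Rightarrow(c)$: Brown representability provides right adjoints $(f_i)_*$, and since $f_i^*$ is tensor-exact with $f_i^*\unit\cong\unit$ the projection formula $(f_i)_*(X)\otimes Y\cong(f_i)_*(X\otimes f_i^*Y)$ holds, so $\mathscr{D}:=\Loc_{\otimes}\langle\bigcup_i\Im(f_i)_*\rangle$ is a $\otimes$-ideal localizing subcategory. Assuming $(b)$, the stratification bijection gives $\mathscr{D}=\tau(\sigma(\mathscr{D}))$ with $\sigma(\mathscr{D})=\bigcup_{X\in\mathscr{D}}\Supp^{\text{h}}(X)$. The first triangle identity makes $\unit_{\mathscr{S}_i}$ a retract of $f_i^*(f_i)_*\unit_{\mathscr{S}_i}$, so the descent formula applied to $(f_i)_*\unit_{\mathscr{S}_i}\in\mathscr{D}$ yields $\Supp^{\text{h}}\big((f_i)_*\unit_{\mathscr{S}_i}\big)\supseteq\psi_i\big(\Supp^{\text{h}}(\unit_{\mathscr{S}_i})\big)=\psi_i\big(\Spc^{\text{h}}(\mathscr{S}_i^c)\big)$, while the descent formula applied to $\unit_{\mathscr{K}}$ gives $\Spc^{\text{h}}(\mathscr{K}^c)=\Supp^{\text{h}}(\unit_{\mathscr{K}})=\bigcup_i\psi_i\big(\Spc^{\text{h}}(\mathscr{S}_i^c)\big)$. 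Combining, $\sigma(\mathscr{D})=\Spc^{\text{h}}(\mathscr{K}^c)$, so $\mathscr{D}=\tau\big(\Spc^{\text{h}}(\mathscr{K}^c)\big)=\mathscr{K}$, which is $(c)$.

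For $(c)\Rightarrow(b)$ I would verify $(2.11.1)$ and $(2.11.2)$ for $\mathscr{K}$. Both reduce, via $(c)$ and the projection formula, to pushing the corresponding stratification data of each $\mathscr{S}_i$ forward along $(f_i)_*$; the new ingredient needed in both cases is a base-change description of how the residue objects interact with $f_i^*$ and $(f_i)_*$ --- concretely, that $f_i^*E_{\mathscr{B}}$, when nonzero, is assembled from the residue objects $E_{\mathscr{B}'}$ of $\mathscr{S}_i$ with $\psi_i(\mathscr{B}')=\mathscr{B}$, and dually that $(f_i)_*E_{\mathscr{B}'}$ lies in $\Loc_{\otimes}\langle\{E_{\psi_i(\mathscr{B}')}\}\rangle$. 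Granted this, for the homological local-to-global principle one writes $M=M\otimes\unit_{\mathscr{K}}\in\Loc_{\otimes}\langle(f_i)_*(f_i^*M\otimes Z)\rangle$ using $(c)$, applies the local-to-global principle in each $\mathscr{S}_i$ to $f_i^*M\otimes Z$, and pushes the resulting decomposition forward with the projection formula and the base-change facts to land in $\Loc_{\otimes}\langle\{M\otimes E_{\mathscr{B}}\}_{\mathscr{B}}\rangle$. For the homological minimality condition one takes $\mathscr{B}\in\Spc^{\text{h}}(\mathscr{K}^c)$, uses faithfulness of $\Supp^{\text{h}}$ and the descent formula to pick an index $i$ with $f_i^*E_{\mathscr{B}}\neq0$ and a point $\mathscr{B}'\in\Supp^{\text{h}}(f_i^*E_{\mathscr{B}})\subseteq\psi_i^{-1}(\mathscr{B})$, and then transports the minimality of $\Loc_{\otimes}\langle\{E_{\mathscr{B}'}\}\rangle$ inside $\mathscr{S}_i$ up to $\mathscr{K}$ via $(f_i)_*$, the base-change facts, and $(c)$, which ensures the pushed-forward objects generate enough of $\mathscr{K}$ for $\Loc_{\otimes}\langle\{E_{\mathscr{B}}\}\rangle$ to come out minimal. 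Combined with $(a)\Leftrightarrow(b)$, this yields the equivalence of all three conditions.

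The main obstacle, as in any descent statement of this kind, is establishing the base-change behavior of the pure-injective residue objects under the adjunctions $(f_i^*,(f_i)_*)$, together with the descent formula $\Supp^{\text{h}}(M)=\bigcup_i\psi_i(\Supp^{\text{h}}(f_i^*M))$ on which it rests. Minimality is the genuinely delicate half: the local-to-global principle is a statement about localizing $\otimes$-ideals in aggregate and moves along conservative coproduct-preserving tensor-triangulated functors fairly robustly, whereas minimality is a statement about individual residue objects and requires a precise comparison of $E_{\mathscr{B}}$ on $\mathscr{K}$ with the residue objects of $\mathscr{S}_i$ sitting over $\mathscr{B}$. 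It is exactly here that the full strength of the hypothesis --- that each $\mathscr{S}_i$ is tt-stratified, hence satisfies minimality itself, and not merely that $\Spc(\mathscr{S}_i^c)$ is computed --- is used in an essential way.
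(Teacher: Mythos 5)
The paper does not prove this statement: Theorem \ref{descent} is presented as a quotation of \cite[Theorem~A]{BHSZ24} and is used downstream as a black box (in the proof of the Main Theorem). There is therefore no in-paper argument against which to compare your proposal, and what follows assesses the proposal on its own terms.

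Your reduction of $(a)\Leftrightarrow(b)$ to Theorem \ref{nos} is correct and clean. You are also right that the hypothesis ``$\mathscr{S}_i$ is tt-stratified'' secretly carries a weakly Noetherian assumption and, via Theorem \ref{nos}, implies both h-stratification and Nerves-of-Steel for each $\mathscr{S}_i$, so (2.11.1) and (2.11.2) hold there. Your $(b)\Rightarrow(c)$ argument is essentially complete once the descent formula
\[
\Supp^{\h}(M) \;=\; \bigcup_{i\in I}\psi_i\bigl(\Supp^{\h}(f_i^*M)\bigr)
\]
is granted: the triangle-identity retraction $\unit_{\mathscr{S}_i}\,|\,f_i^*(f_i)_*\unit_{\mathscr{S}_i}$ is correct, and the monotonicity of $\Supp^{\h}$ under retracts then forces $\sigma(\mathscr{D})=\Spc^{\h}(\mathscr{K}^c)$, hence $\mathscr{D}=\mathscr{K}$.

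The genuine gap is that the two load-bearing lemmas are left as assertions. First, the descent formula for extended homological support on \emph{big} objects is precisely the technical heart of \cite{BHSZ24}; it is not formal, since one must relate the pure-injective residue objects $E_{\mathscr{B}}$ of $\mathscr{K}$ to those of $\mathscr{S}_i$ along $\psi_i$, and this requires an honest analysis of how $f_i^*$ and $(f_i)_*$ interact with the homological residue fields. Second, your $(c)\Rightarrow(b)$ step asserts a base-change description of $f_i^*E_{\mathscr{B}}$ in terms of $\{E_{\mathscr{B}'}:\psi_i(\mathscr{B}')=\mathscr{B}\}$ which, as far as I can tell, is not quite the form the comparison takes in \cite{BHSZ24}; they do not reconstruct $f_i^*E_{\mathscr{B}}$ object-by-object but rather argue at the level of Serre tensor-ideals in the Freyd envelopes. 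You do honestly flag minimality as the delicate part, which is the right instinct --- but as written, both the descent formula and the residue-object base-change remain unproven, so the argument is a plausible outline of a proof rather than a proof.
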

This generalizes all the descent results in the literature, and provides a uniform approach for developing new results, as we will lay out for Lie superalgebras in the following sections. 



\section{Lie superalgebras}

We give an overview of basic concepts from the theory of Lie superalgebras and their representations. Most of this material is covered in \cite{Kac1} and \cite{BKN1}. Fix $\mathbb{C}$, the complex numbers, as the ground field in all that follows. Unless stated otherwise, $\otimes$ means $\otimes_{\mathbb{C}}$.

\subsection{Lie superalgebras and their representations} A Lie superalgebra is a super vector space $\fg = \fg_{\0} \oplus \fg_{\bar 1}$ equipped with a bilinear bracket $[-,-] : \fg \otimes \fg \to \fg$ which satisfies $\mathbb{Z}/2\mathbb{Z}$-graded versions of the usual skew-symmetry and Jacobi axioms for Lie algebras. Let $\fg = \fg_{\0} \oplus \fg_{\bar 1}$ be a Lie superalgebra. Given a homogeneous element $v \in \fg$, we fix the notation $\overline{v}$ to denote the degree of $v$ with respect to the $\mathbb{Z}/2\mathbb{Z}$-grading. If $\overline{v} = 0$, then $v$ is called even, and if $\overline{v} = 1$, then $v$ is called odd. Note that with this setup the even subsuperalgebra $\fg_{\bar 0}$ of $\fg$ may be viewed as a Lie algebra with bracket obtained by restricting the bracket of $\fg$. The Lie superalgebra $\fg = \fg_{\0} \oplus \fg_{\bar 1}$ is called \emph{classical} if there is a connected, reductive algebraic group $G_{\bar 0}$ such that $\Lie(G_{\0}) = \fg_{\0}$, and if there is an action of $G_{\0}$ on $\fg_{\bar 1}$ which differentiates to the adjoint action of $\fg_{\0}$ on $\fg_{\bar 1}$ If, in addition to being classical, $\fg$ has a nondegenerate, invariant, supersymmetric, even bilinear form, then $\fg$ is called basic classical. The basic classical Lie superalgebras were classified by Kac \cite{Kac1}. 

\par Given a Lie superalgebra $\fg$, there is a universal enveloping superalgebra $U(\fg)$ which satisfies a super analog of the PBW theorem for Lie algebras. The category of $\fg$-supermodules has as objects all left $U(\fg)$-supermodules. This means that $\fg$-supermodules are super vector spaces and that the $\fg$-action is compatible with the $\mathbb{Z}/2\mathbb{Z}$-grading. Morphisms between $\fg$-supermodules are even (i.e. degree preserving) morphisms in $\Hom_{\mathbb{C}}(M, M')$ which satisfy $f(xm)=(-1)^{\bar f \bar x}xf(m)$ for all $m \in M$ and all $x \in U(\fg)$. This makes sense as stated only for homogenous elements, and should be extended via linearity in general. Given two $\fg$-supermodules $M, N$, one can use the coproduct and antipode of $U(\fg)$ to give $\fg$-supermodule structures to the vector space tensor product $M \otimes N$, and, when $M$ is finite-dimensional, the contragradient dual $M^{*}$. Denote the category of $\fg$-supermodules as $\fg$-sMod. Because morphisms in $\fg$-sMod are even, it is an abelian category. As convention and notation we consider Lie algebras as Lie superalgebras concentrated in degree $\bar 0$, and we refer to supermodules as modules.

\subsection{The categories $\mathcal{C}_{(\fg, \fg_{\0})}$ and $\mathcal{F}_{(\fg, \fg_{\bar 0})}$ and their stable categories} 
Given a basic classical Lie superalgebra one can consider the $\mathcal{C}_{(\fg, \fg_{\0})}$ which is the category whose objects are $\fg$-modules that, when viewed as modules for the Lie algebra $\fg_{\0}$, are finitely semisimple; i.e., are direct sums of finite-dimensional simple $\fg_{\0}$-modules. The full subcategory of $\mathcal{C}_{(\fg, \fg_{\0})}$ consisting of only finite-dimensional modules is denoted by $\mathcal{F}_{(\fg, \fg_{\bar 0})}$. Both the categories $\mathcal{C}_{(\fg, \fg_{\0})}$ and $\mathcal{F}_{(\fg, \fg_{\bar 0})}$ are abelian categories. Moreover, $\mathcal{C}_{(\fg, \fg_{\0})}$ and $\mathcal{F}_{(\fg, \fg_{\bar 0})}$ are also Frobenius categories. In other words, these categories have enough projective and injective objects, and projectives and injectives coincide. This implies that one can form the stable module categories $\Stab(\mathcal{C}_{(\fg, \fg_{\bar 0})})$ and $\stab(\mathcal{F}_{(\fg, \fg_{\bar 0})})$. Objects in the stable categories are the same as the objects in the underlying abelian categories from which they are formed, but morphisms in the stable categories are equivalence classes of morphisms where two morphisms are considered equivalent if their difference factors through a projective object. The stable categories are triangulated categories, and the tensor product of modules gives $\Stab(\mathcal{C}_{(\fg, \fg_{\bar 0})})$ and $\stab(\mathcal{F}_{(\fg, \fg_{\bar 0})})$ the structure of tensor triangulated categories. Moreover, one has that $\Stab(\mathcal{C}_{(\fg, \fg_{\bar 0})})$ is a rigidly-compactly generated TTC with full subcategory of compact rigid objects $\stab(\mathcal{F}_{(\fg, \fg_{\bar 0})})$. 
Let $\mathcal{C}$ denote the category $\mathcal{C}_{(\fg, \fg_{\bar 0})}$, and let $\mathcal{F}$ denote the category $\mathcal{F}_{(\fg, \fg_{\bar 0})}$. Given modules $M, N$ in $\mathcal{F}$, the group of degree $n$ extensions, $\Ext_{\mathcal{F}}^n(M, N)$ can be realized via relative Lie superalgebra cohomology for the pair $(\fg, \fg_{\bar 0})$:
$$
\Ext_{\mathcal{F}}^n(M, N) \cong \Ext_{(\fg, \fg_{\bar 0})}^n(M, N) \cong \opH^n(\fg, \fg_{\bar 0}; M^* \otimes N). 
$$
There exists an explicit Koszul type resolution that can be used to compute relative Lie superalgebra cohomology. An interesting feature that obtains when considering the relative cohomology ring $\Ext_{\mathcal{F}}^{\bullet}(\mathbb{C}, \mathbb{C}) \cong \opH^{\bullet}(\fg, \fg_{\bar 0}; \mathbb{C})$ is that the cochains are uniformly zero. From this, BKN showed in \cite[Section 2.5]{BKN1} that there is isomorphism 
$$
\opH^{\bullet}(\fg, \fg_{\bar 0}; \mathbb{C}) \cong \text{S}^{\bullet}(\fg_{\bar 1}^*)^{G_{\bar 0}}
$$
of graded rings, and that the relative cohomology is a polynomial algebra \cite[Section 8.11]{BKN1}.

\subsection{Cohomological support varieties} BKN construct, in \cite[Section 4.3]{BKN4}, cohomological support varieties which we need in order to state a key assumption on realization of supports. We briefly review their construction. Let $\fg = \fg_{\bar 0} \oplus \fg_{\bar 1}$ be a classical Lie superalgebra. Set
$$
R:= \opH^{\bullet}(\fg, \fg_{\bar 0}; \mathbb{C}) \cong \text{S}^{\bullet}(\fg_{\bar 1}^*)^{G_{\bar 0}}.
$$
Since $G_{\bar 0}$ is reductive, a classical result of Hilbert gives that $R$ is finitely-generated as a commutative $\mathbb{C}$-algebra. Also, given two modules $M_1$, $M_2$ in $\mathcal{F}_{(\fg, \fg_{\bar 0})}$, $\Ext_{(\fg, \fg_{\bar 0})}^{\bullet}(M_1, M_2)$ is a finitely-generated $R$-module. 
\par The space $X = \Proj(R)$ is a Zariski topological space. Let $\mathcal{X}_{cl}$ denote the collection of closed subsets of $X$. One can construct an assignment $V_{(\fg, \fg_{\bar 0})}: \stab(\mathcal{F}_{(\fg, \fg_{\bar 0})}) \to \mathcal{X}_{cl}$ by setting 
$$
V_{(\fg, \fg_{\bar 0})}(M) := \Proj(R/J_{(\fg, M)})
$$
where $J_{(\fg, M)} = \Ann_R(\Ext_{\mathcal{F}_{(\fg, \fg_{\bar 0})}}^{\bullet}(M, M))$.
\par The assignment $V_{(\fg, \fg_{\bar 0})}(-)$ satisfies properties (2.3.1)-(2.3.4) and (2.3.6) of support data. It is a \emph{pre support datum} in the sense of BKN. Additional assumptions are needed on $\fg$ to guarantee that $V_{(\fg, \fg_{\bar 0})}(-)$ is a support datum. 

\subsection{Detecting subalgebras}
Let $\fg = \fg_{\bar 0} \oplus \fg_{\bar 1}$ be a basic, classical Lie superalgebra. A remarkable contribution of Boe, Kujawa, and Nakano was to prove the existence of important subalgebras called detecting subalgebras $\mathfrak{f} = \mathfrak{f}_{\bar 0} \oplus \mathfrak{f}_{\bar 1} \leq \fg$ which have much easier representation theory than $\fg$ but which nonetheless ``detect" the relative $(\fg, \fg_{\bar 0})$-cohomology. Detecting subalgebras are constructed by considering the action of the algebraic group $G_{\bar 0}$ on $\fg_{\bar 1}$. We recall briefly the parts of their construction we need. 
\par View the set $\fg_{\bar 1}$ as an affine variety with the Zariski topology. A point $x \in \fg_{\bar 1}$ is called regular if the orbit $G_{\bar 0} \cdot x$ has maximum possible dimension, and semisimple if $G_{\0} \cdot x$ is closed in $\fg_{\bar 1}$. The action of $G_{\bar 1}$ is called stable if $\fg_{\bar 1}$ has an open dense subset consisting of semisimple points. If there is an open dense subset of $\fg_{\bar 1}$ such that the stabilizer subgroups of any two points in this set are conjugate subgroups of $G_{\bar 0}$, then the stabilizer of such a point is called a stabilizer in general position. If the action of $G_{\bar 0}$ on $\fg_{\bar 1}$ is stable, then such an open set exists in $\fg_{\bar 1}$. Elements of such an open set are called generic. If the action of $G_{\bar 0}$ on $\fg_{\bar 1}$ is stable then $\fg$ is said to be stable. If $\fg$ is stable, then there is necessarily a generic point $x_{0} \in \mathfrak{g}_{\bar 1}$. Let $H = \text{Stab}_{G_{\bar 0}}(x_0)$ and $N = \text{Norm}_{G_{\bar 0}}(H)$. Set $\mathfrak{f}_{\bar 1} = \mathfrak{g}_{\bar 1}^H$, and $\mathfrak{f}_{\bar 0} = [\mathfrak{f}_{\bar 1}, \mathfrak{f}_{\bar 1}]$. The Lie superalgebra $\mathfrak{f} = \mathfrak{f}_{\bar 0} \oplus \mathfrak{f}_{\bar 1}$ is classical, and is a detecting subalgebra. The sense in which detecting subalgebras determine cohomology is as follows. The inclusion $\mathfrak{f} \leq \mathfrak{g}$ induces a restriction homomorphism $\text{S}^{\bullet}(\mathfrak{f}_{\bar 1}^*) \to \text{S}^{\bullet}(\mathfrak{f}_{\bar 1}^*)$ which induces an isomorphism 
$$
\opH^{\bullet}(\fg, \fg_{\bar 0}; \mathbb{C}) \to \opH^{\bullet}(\mathfrak{f}, \mathfrak{f}_{\bar 0}; \mathbb{C})^N
$$
in cohomology.
\begin{remark}
In \cite{BKN1}, the authors construct two families of detecting subalgebras $\mathfrak{e}$ and $\mathfrak{f}$ of $\mathfrak{g}$. The subalgebras $\mathfrak{e}$ are not considered in this paper. 
\end{remark}


\subsection{Splitting subalgebras}
In order to make use of the Balmer spectrum computations from \cite{BKN1}, and the Nerves-of-Steel result from \cite{HN24}, we need to ensure that our detecting subalgebras satisfy an additional condition. We need that they are so-called \emph{splitting subalgebras}. The idea of splitting subalgebras was introduced by Serganova and Sherman in \cite{SS22}. In the original paper the authors work in the context of the ambient algebraic supergroup, but for our purposes it will be useful to rephrase the definitions somewhat into the context of Lie superalgebras. The following definition is from \cite{HN24} and is equivalent to the original definition. 
\begin{definition}
Let $\fg = \fg_{\bar 0} \oplus \fg_{\bar 1}$ be a classical Lie superalgebra and $G$ be an algebraic supergroup scheme with $\text{Lie }G = \fg$. Moreover, let $Z \leq G$ be a subsupergroup with $\fz = \fz_{\bar 0} \oplus \fz_{\bar 1}$ being classical and $\text{Lie }Z = \fz$. Then $\fz$ is a \emph{splitting subalgebra} if and only if the trivial module $\mathbb{C}$ is a direct summand of $\text{ind}_Z^G\mathbb{C}$. 
\end{definition}

The following theorem summarizes results in \cite[Section 2]{SS22}. The approach presented here is slightly different and  uses the work for BBW parabolic subgroups by D. Grantcharov, N. Grantcharov, Nakano and Wu 
\cite{GGNW21}. 

\begin{theorem}\label{T:splittingprop} Let ${\mathfrak g}$ be a classical Lie superalgebra and ${\mathfrak z}$ be a splitting subalgebra. Let $M$, $N$ be modules in ${\mathcal C}_{({\mathfrak g},{\mathfrak g}_{\0})}$.
\begin{itemize} 
\item[(a)] $R^{j}\operatorname{ind}_{Z}^{G}{\mathbb C}=0$ for $j>0$. 
\item[(b)] $M$ is projective in ${\mathcal C}_{({\mathfrak g},{\mathfrak g}_{\0})}$  if and only if $M$ when restricted to ${\mathfrak z}$ is projective in ${\mathcal C}_{({\mathfrak z},{\mathfrak z}_{\0})}$.  
\item[(c)] For all $n\geq 0$, $\opExt^{n}_{({\mathfrak g},{\mathfrak g}_{\0})}(M,N \otimes \operatorname{ind}_{Z}^{G}{\mathbb C})\cong \opExt^{n}_{({\mathfrak z},{\mathfrak z}_{\0})}(M,N)$. 
\item[(d)] For all $n\geq 0$, the restriction map $\operatorname{res}:\opExt_{({\mathfrak g},{\mathfrak g}_{\0})}^{n}(M,N)\rightarrow \opExt^{n}_{({\mathfrak z},{\mathfrak z}_{\0})}(M,N)$ is injective. 
\end{itemize} 
\end{theorem}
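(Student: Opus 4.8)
The plan is to deduce Theorem \ref{T:splittingprop} from the splitting hypothesis by transporting the classical Lie-algebra-cohomology machinery across the induction functor $\operatorname{ind}_Z^G$. First I would record the basic adjunction and exactness properties: since $\fz$ is a splitting subalgebra, the trivial module $\mathbb{C}$ is a direct summand of $\operatorname{ind}_Z^G\mathbb{C}$, and by general nonsense (tensor identity) $\operatorname{ind}_Z^G(\operatorname{res}_Z M) \cong M \otimes \operatorname{ind}_Z^G\mathbb{C}$ for $M \in \mathcal{C}_{(\fg,\fg_{\0})}$. For part (a), the vanishing $R^j\operatorname{ind}_Z^G\mathbb{C} = 0$ for $j>0$ should follow from the explicit BBW-parabolic analysis of \cite{GGNW21}: one resolves $\operatorname{ind}_Z^G\mathbb{C}$ using the fact that $G/Z$ has an affine-type geometry in the relevant cases, or more directly from the observation that $\operatorname{res}_Z$ sends injectives to injectives (a consequence of $\operatorname{ind}$ being exact on the even part and the PBW filtration), so that $\operatorname{ind}_Z^G$ is exact because it is computed by an honest adjoint with no higher derived terms.

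Next, for part (c), I would use the Grothendieck spectral sequence / Frobenius reciprocity for the pair $(\operatorname{res}_Z, \operatorname{ind}_Z^G)$ in the relative cohomology setting: $\opExt^n_{(\fg,\fg_{\0})}(M, N\otimes\operatorname{ind}_Z^G\mathbb{C}) \cong \opExt^n_{(\fg,\fg_{\0})}(M, \operatorname{ind}_Z^G\operatorname{res}_Z N)$, and then by the exactness from (a) this spectral sequence collapses to give $\cong \opExt^n_{(\fz,\fz_{\0})}(\operatorname{res}_Z M, \operatorname{res}_Z N)$. The key input is that $\operatorname{ind}_Z^G$ restricted to $\mathcal{C}_{(\fz,\fz_{\0})}$ is exact and adjoint to the exact functor $\operatorname{res}_Z$, so it preserves injectives, making the relative-$\opExt$ adjunction hold on the nose in each degree. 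Part (d) then follows immediately from (c): the splitting gives a $\fg$-module decomposition $N\otimes\operatorname{ind}_Z^G\mathbb{C} \cong N \oplus (\text{rest})$, so the restriction map $\operatorname{res}: \opExt^n_{(\fg,\fg_{\0})}(M,N) \to \opExt^n_{(\fz,\fz_{\0})}(M,N)$ is identified with the inclusion of a direct summand into the right-hand side of (c), hence is split injective.

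For part (b), the ``only if'' direction is the easy one — it follows because $\operatorname{res}_Z$ sends projectives to projectives, which in turn follows from (a)/(c) (projectivity is detected by vanishing of $\opExt^1$, or directly since $\operatorname{res}_Z$ has the exact left adjoint $\operatorname{ind}_Z^G$ once we know the latter is exact). The ``if'' direction is the substantive one: if $\operatorname{res}_Z M$ is projective in $\mathcal{C}_{(\fz,\fz_{\0})}$ then $\operatorname{ind}_Z^G\operatorname{res}_Z M = M\otimes\operatorname{ind}_Z^G\mathbb{C}$ is projective in $\mathcal{C}_{(\fg,\fg_{\0})}$ (induction of a projective along a functor with exact right adjoint is projective), and since $M$ is a direct summand of $M\otimes\operatorname{ind}_Z^G\mathbb{C}$ by the splitting, $M$ is projective as a summand of a projective. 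One small subtlety to handle carefully is that projectivity and injectivity coincide in these Frobenius categories, so the argument for projectives transports to injectives without extra work, but one should state which is being used.

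The main obstacle I expect is part (a) — establishing $R^j\operatorname{ind}_Z^G\mathbb{C} = 0$ for $j>0$ rigorously in the Lie superalgebra (rather than supergroup-scheme) formulation, since the rest of the theorem is essentially formal once this vanishing is in hand. I would handle this by invoking the BBW-parabolic technology of \cite{GGNW21}: realize the induction as cohomology of an appropriate line bundle on a flag-type superscheme and use a Bott-type vanishing, or alternatively reduce to the known cohomological-finiteness and exactness statements for detecting subalgebras already recorded in \cite{BKN1} and \cite{SS22}. Everything after (a) is bookkeeping with adjunctions, Grothendieck spectral sequences, and the direct-summand property coming from the definition of splitting subalgebra.
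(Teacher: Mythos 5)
The paper does not actually prove this theorem: it states it as a summary of results from \cite[Section~2]{SS22}, with the remark that a slightly different argument can be extracted from the BBW-parabolic technology of \cite{GGNW21}. So there is no proof in the paper to compare your attempt against; what you have written is a blind reconstruction of the cited proofs, and as such it is essentially the expected route. Your skeleton — tensor identity $\operatorname{ind}_Z^G(\operatorname{res}_Z N)\cong N\otimes\operatorname{ind}_Z^G\mathbb{C}$, derived Frobenius reciprocity collapsing via the vanishing in (a) to give (c), identification of $\operatorname{res}$ with a split-summand inclusion for (d), and a Frobenius-category direct-summand argument for (b) — is exactly the standard structure, and you rightly single out (a) as the one non-formal input, which in the paper's framework is precisely what \cite{GGNW21} supplies.

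Two small corrections are worth making explicit. First, in part (b) your phrase ``induction of a projective along a functor with exact right adjoint is projective'' has the adjunction pointing the wrong way: here $\operatorname{ind}_Z^G$ is the \emph{right} adjoint to $\operatorname{res}_Z$, so the clean statement is that $\operatorname{ind}_Z^G$ preserves injectives (being a right adjoint to an exact functor), and one then invokes ``projective $=$ injective'' in these Frobenius categories to convert. You do flag this at the end, but the main sentence should be fixed so it does not appear to rest on a false general principle. Second, in part (d) the claim that the Frobenius-reciprocity isomorphism of (c) carries the unit map $N\hookrightarrow N\otimes\operatorname{ind}_Z^G\mathbb{C}$ to the restriction map $\operatorname{res}$ on relative $\opExt$ is correct but is doing real work; it is worth stating as a lemma (it is the standard compatibility of the unit of adjunction with Shapiro's lemma in relative cohomology), since this is the entire content of (d) once (c) is known. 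With those adjustments your outline is a faithful stand-in for the cited proofs.
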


Serganova and Sherman proved that the detecting subalgebra ${\mathfrak f}$ for classical Lie algebras of Type A are splitting subalgebras. \cite[Theorem 1.1]{SS22}.

\section{Stratification for Lie superalgebra representations}

We now investigate the various notions of stratification in the context of Lie superalgebra representations, culminating in a classification of localizing subcategories in certain situations. We set the following as notion and assumptions throughout this section. Let $\mathfrak{g} = \mathfrak{g}_{\bar 0} \oplus \mathfrak{g}_{\bar 1}$ be a classical Lie superalgebra with a splitting, detecting subalgebra $\fz = \fz_{\bar 0} \oplus \fz_{\bar 1} \leq \fg$. The primary example to keep in mind is the general linear Lie superalgebra $\glmn$.

\subsection{BIK stratification for detecting subalgebras} In \cite{HN24}, the author and Nakano employed BIK stratification in order to classify the localizing subcategories for $\Stab(\mathcal{C}_{(\fz, \fz_{\bar 0})})$ where $\fz$ is a detecting subalgebra. We take this opportunity to record the argument, which is based on the analogous result in modular representation theory for elementary abelian two groups in characteristic two. The key fact used is the fact that for the detecting subalgebras $\fz_{\bar 0}$ is a torus, and $[\fz_{\bar 0}, \fz_{\bar 1}] = 0$.  

\begin{theorem}\label{BIKstratified}
The TTC $\Stab(\mathcal{C}_{(\fz, \fz_{\bar 0})})$ is stratified in the sense of BIK by the relative Lie superalgebra cohomology ring $\opH^{\bullet}(\fz, \fz_{\bar 0}; \mathbb{C})$. 
\end{theorem}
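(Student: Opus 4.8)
The plan is to reduce the stratification of $\Stab(\mathcal{C}_{(\fz, \fz_{\bar 0})})$ to a concrete algebra over which BIK stratification is already understood, exactly mirroring the strategy for elementary abelian $2$-groups in characteristic $2$. The key structural input is that for a detecting subalgebra $\fz_{\bar 0}$ is a torus and $[\fz_{\bar 0}, \fz_{\bar 1}] = 0$, so $U(\fz)$ decomposes (up to the torus action, which only imposes a grading) in terms of the exterior superalgebra $\Lambda := \Lambda^{\bullet}(\fz_{\bar 1})$ generated in odd degree. First I would set up an equivalence (or at least a tensor-triangulated equivalence after accounting for the $\fz_{\bar 0}$-weight decomposition) between $\Stab(\mathcal{C}_{(\fz, \fz_{\bar 0})})$ and the stable category of $\Lambda$-supermodules; the relative cohomology ring $\opH^{\bullet}(\fz, \fz_{\bar 0}; \mathbb{C}) \cong \text{S}^{\bullet}(\fz_{\bar 1}^*)$ matches the Ext-algebra $\operatorname{Ext}^{\bullet}_{\Lambda}(\mathbb{C}, \mathbb{C})$, which is polynomial precisely because the generators are odd.

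Next I would invoke the BGG-type correspondence: $\Stab(\Lambda\text{-sMod})$ is equivalent, as an $R$-linear tensor triangulated category, to a category of (differential) graded modules or sheaves over $\mathbb{P}(\fz_{\bar 1})$, where $R = \text{S}^{\bullet}(\fz_{\bar 1}^*)$ acts canonically via $\End^{\bullet}(\unit)$. This is the same mechanism by which $\Stab(k E)$ for $E$ elementary abelian is handled in \cite{BIK}. The point is that once one is on the geometric side the local cohomology functors $\mathit{\Gamma}_{\mathfrak{p}}$ become transparent and one can verify the BIK minimality condition (2.7.2) directly: the category $\mathit{\Gamma}_{\mathfrak{p}}\mathscr{K}$ at each $\mathfrak{p} \in \Proj R$ is generated by a residue-field-type object and has no proper nonzero $\otimes$-ideal localizing subcategories. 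For the local-to-global principle (2.7.1) I would simply cite the fact recorded earlier in the excerpt that it holds automatically when $\Proj R$ has finite Krull dimension, which is the case here since $\text{S}^{\bullet}(\fz_{\bar 1}^*)$ is a finitely generated polynomial ring.

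Putting this together, verifying conditions (2.7.1) and (2.7.2) for $\mathscr{K} = \Stab(\mathcal{C}_{(\fz, \fz_{\bar 0})})$ yields that $\mathscr{K}$ is stratified in the sense of BIK by $R = \opH^{\bullet}(\fz, \fz_{\bar 0}; \mathbb{C})$, which is the claim. I would flag one subtlety: the category $\mathcal{C}_{(\fz, \fz_{\bar 0})}$ is \emph{not} monogenic in general — unlike $k E$-modules — because of the $\fz_{\bar 0}$-weight spaces, so the form of BIK stratification invoked here must be the $\otimes$-ideal version rather than the strictly monogenic statement recorded in Section 2.7; one handles this either by decomposing along the (rank-one torus) weight lattice so that each graded piece is monogenic over the exterior algebra, or by appealing to the general $\otimes$-ideal formulation in \cite{BIK}.

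I expect the main obstacle to be making the reduction in the first paragraph fully rigorous: precisely identifying $\Stab(\mathcal{C}_{(\fz, \fz_{\bar 0})})$ with (a graded version of) $\Stab(\Lambda\text{-sMod})$ as tensor triangulated categories with compatible canonical $R$-actions, and keeping careful track of the torus grading so that the non-monogenic nature of the ambient category does not break the BGG correspondence. Once that bridge is built, verifying (2.7.1) and (2.7.2) is essentially the classical computation and should go through as in \cite{BIK}.
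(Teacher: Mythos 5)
Your proposal follows essentially the same strategy as the paper: decompose $\mathcal{C}_{(\fz, \fz_{\bar 0})}$ along $\fz_{\bar 0}$-weights (using that $\fz_{\bar 0}$ is a torus and $[\fz_{\bar 0}, \fz_{\bar 1}]=0$), identify the principal block with $\Lambda^{\bullet}(\fz_{\bar 1})$-supermodules, and then pass through a BGG-type correspondence to a classical stratification result, with the local-to-global principle supplied for free by finite Krull dimension. The one place you gloss over — and you yourself flag it as the main obstacle — is precisely what the paper handles by a specific mechanism: you propose that $R$ acts ``canonically via $\End^{\bullet}(\unit)$,'' but in $\Stab(\Lambda^{\bullet}(\fz_{\bar 1})\text{-sMod})$ the graded endomorphism ring of the unit is a Tate-type ring, which is not Noetherian, so it is not directly available as a BIK stratifying ring. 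The paper sidesteps this exactly as one does for elementary abelian groups: it passes to the homotopy category of injectives $\bK(\Inj \mathcal{C}_{(\fz, \fz_{\bar 0})})$, uses the Krause recollement relating $\bK_{\text{ac}}(\Inj \mathcal{C})$, $\bK(\Inj \mathcal{C})$, and $\mathbf{D}(\mathcal{C})$, identifies $\bK(\Inj \mathcal{C}) \simeq \mathbf{D}(S)$ for $S$ a graded-polynomial superalgebra where the unit's endomorphism ring really is $R$ and Neeman's theorem applies, and then transfers stratification back along $\Stab(\mathcal{C}) \simeq \bK_{\text{ac}}(\Inj \mathcal{C})$. Your projective-space BGG picture is a variant of the same affine BGG picture the paper uses ($\mathbf{D}(S)$ for a polynomial superalgebra), so the ideas match; what you would need to add to make it rigorous is the $\bK(\Inj)$/recollement step (or an equivalent device) to obtain a genuinely Noetherian canonical ring action before invoking Neeman-type minimality.
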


\begin{proof}
As notation let $\mathscr{K} = \Stab(\mathcal{C}_{(\fz, \fz_{\bar 0})})$, let $\mathcal{C} = \mathcal{C}_{(\fz, \fz_{\bar 0})}$, and let $R = \opH^{\bullet}(\fz, \fz_{\bar 0}; \mathbb{C})$. Therefore, it remains only to check the minimality condition, but it turns out that it is not convenient to check minimality for $\mathscr{K}$ directly. The idea is to reduce the problem to a different TTC where the result is known via a version of Neeman's theorem. The first step in this direction is to connect $\mathcal{C}$ to the category of supermodules for the superalgebra $\Lambda^\bullet(\fz_{\bar 1})$ which is the exterior algebra on $\fz_{\bar 1}$ viewed as a superalgebra by declaring the generators to be odd. This is done by observing that because $\fz_{\bar 0}$ is a torus which commutes with $\fz$, the weight space decomposition for a $\fz$-supermodule viewed as a module over $\fz_{\bar 0}$ is a decomposition as $\fz$-supermodules. This gives a decomposition of the category $\mathcal{C} = \bigoplus_{\lambda \in \fz_{\bar 0}^*} \mathcal{C}_\lambda$. 
\par The principal block $\mathcal{C}_0$ consists of modules which are annihilated by the ideal $I$ of $U(\fz)$ generated by $U(\fz_{\bar 0})$. Therefore, since $U(\fz)/I \cong \Lambda^\bullet(\fz_{\bar 1}),$ there is an isomorphism of categories $\mathcal{C}_0 \cong \Lambda^{\bullet}(\fz_{\bar 1})\text{-sMod}$, where again $\Lambda^\bullet(\fz_{\bar 1})$ is the exterior algebra on $\fz_{\bar 1}$ viewed as a superalgebra by declaring the generators to be odd. This equivalence passes to an equivalence at the level of the stable module categories: $\mathscr{K}_0 \cong \Stab(\Lambda^{\bullet}(\fz_{\bar 1})\text{-sMod})$. But from this equivalence one sees that it suffices to classify localizing subcategories for the principal block because there is a natural bijection between localizing subcategories for $\mathscr{K}$ and localizing subcategories for $\mathscr{K}_0$. 
\par Next, we observe that a similar problem obtains as the one that occurs for elementary abelian groups in modular representation theory. Namely, the graded endomorphism ring of the unit, $\mathbb{C}$, in $\mathcal{C}$ is not the cohomology ring $R$. Instead, it is an analog of the Tate cohomology ring, which is typically not Noetherian. To get around this problem we instead consider the homotopy category of injectives $\bK(\Inj \mathcal{C}_{(\fz, \fz_{\bar 0})})$. There is an equivalence of triangulated categories between the full subcategory of $\bK(\Inj \mathcal{C}_{(\fz, \fz_{\bar 0})})$ consisting of acyclic complexes $\bK_{\text{ac}}(\Inj \mathcal{C}_{(\fz, \fz_{\bar 0})}) \simeq \Stab(\mathcal{C}_{(\fz, \fz_{\bar 0})})$ and we have the following recollement (c.f. \cite{Kra05}).

\[\begin{tikzcd}
	\bK_{\text{ac}}(\Inj \mathcal{C}_{(\fz, \fz_{\bar 0})}) & \bK(\Inj \mathcal{C}_{(\fz, \fz_{\bar 0})}) & \mathbf{D}(\mathcal{C}_{\mathfrak{g},\mathfrak{g}_{\bar 0}})
	\arrow[from=1-1, to=1-2]
	\arrow[shift left=3, from=1-2, to=1-1]
	\arrow[shift right=3, from=1-2, to=1-1]
	\arrow[from=1-2, to=1-3]
	\arrow[shift right=3, from=1-3, to=1-2]
	\arrow[shift left=3, from=1-3, to=1-2]
\end{tikzcd}\]
But there is an equivalence of TTCs $\bK(\Inj \mathcal{C}_{(\fz, \fz_{\bar 0})}) \simeq \mathbf{D}(S)$, where $S$ is a graded-polynomial superalgebra, and to this one can apply a version of Neeman's theorem. 

\end{proof}
This yields the following, which we state with the most general hypothesis. 
 
\begin{corollary}
Let $\fz = \fz_{\bar 0} \oplus \fz_{\bar 1}$ be a classical Lie superalgebra such that $\fz_{\bar 0}$ is a torus and $[\fz_{\bar 0}, \fz_{\bar 1}] = 0$. There is a bijection between the nonzero $\otimes$-ideal localizing subcategories of the $\Stab(\mathcal{C}_{(\fz, \fz_{\bar 0})})$ and subsets of $\Proj \opH^{\bullet}(\fz, \fz_{\bar 0}; \mathbb{C})$. 
\end{corollary}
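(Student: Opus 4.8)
The plan is to deduce this corollary from Theorem~\ref{BIKstratified} together with the classification theorem of BIK (Theorem 2.7.2 in the excerpt, i.e. \cite[Theorem 4.19]{BIK}). Since Theorem~\ref{BIKstratified} asserts that $\Stab(\mathcal{C}_{(\fz,\fz_{\bar 0})})$ is stratified in the sense of BIK by $R = \opH^\bullet(\fz,\fz_{\bar 0};\CC)$, the maps $\sigma$ and $\tau$ become mutually inverse bijections between $\otimes$-ideal localizing subcategories of $\Stab(\mathcal{C}_{(\fz,\fz_{\bar 0})})$ and subsets of $\Supp_{\text{BIK}}(\Stab(\mathcal{C}_{(\fz,\fz_{\bar 0})})) \subseteq \Proj R$. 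So the essential content of the corollary beyond Theorem~\ref{BIKstratified} is twofold: first, that the hypotheses ``$\fz_{\bar 0}$ is a torus and $[\fz_{\bar 0},\fz_{\bar 1}]=0$'' suffice to run the proof of Theorem~\ref{BIKstratified} (the theorem was phrased for detecting subalgebras, but the proof only used exactly these two structural facts, as the remark preceding the theorem points out); and second, that $\Supp_{\text{BIK}}(\Stab(\mathcal{C}_{(\fz,\fz_{\bar 0})}))$ is \emph{all} of $\Proj R$, which is what converts ``subsets of $\Supp_{\text{BIK}}$'' into ``subsets of $\Proj\opH^\bullet(\fz,\fz_{\bar 0};\CC)$'' and accounts for the word ``nonzero'' in the statement (the zero subcategory corresponds to the empty set, and one typically states the bijection on nonzero subcategories versus nonempty subsets, or equivalently on all subcategories versus all subsets).

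The first point I would handle by simply observing that nothing in the proof of Theorem~\ref{BIKstratified} used that $\fz$ is a detecting subalgebra \emph{per se}: the weight-space decomposition $\mathcal{C} = \bigoplus_{\lambda \in \fz_{\bar 0}^*}\mathcal{C}_\lambda$ as a decomposition by $\fz$-supermodules used only that $\fz_{\bar 0}$ is a torus and central in $\fz$; the identification of the principal block $\mathcal{C}_0$ with $\Lambda^\bullet(\fz_{\bar 1})\text{-sMod}$ used only $U(\fz)/I \cong \Lambda^\bullet(\fz_{\bar 1})$ where $I = U(\fz)U(\fz_{\bar 0})^+$, again valid under these hypotheses; and the reduction through $\bK(\Inj\,\mathcal{C}_{(\fz,\fz_{\bar 0})}) \simeq \mathbf{D}(S)$ for a graded-polynomial superalgebra $S$ and the application of Neeman's theorem go through verbatim. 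Thus the BIK-stratification conclusion holds under the stated weaker hypotheses.

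For the second point --- surjectivity of the BIK support, i.e. $\Gamma_{\mathfrak p}(\Stab(\mathcal{C}_{(\fz,\fz_{\bar 0})})) \neq 0$ for every $\mathfrak p \in \Proj R$ --- I would use the explicit description of $R$. By \cite[Section 2.5, Section 8.11]{BKN1} one has $R = \opH^\bullet(\fz,\fz_{\bar 0};\CC) \cong \text{S}^\bullet(\fz_{\bar 1}^*)^{G_{\bar 0}}$; in the principal block, after reducing to $\Lambda^\bullet(\fz_{\bar 1})\text{-sMod}$ and then via BGG to $\mathbf{D}(S)$ with $S$ a graded-polynomial ring on $\dim\fz_{\bar 1}$ generators, every homogeneous prime is realized, and the reductive-quotient/$N$-action bookkeeping (or directly, the Koszul duality identification underlying the proof of Theorem~\ref{BIKstratified}) shows each $\mathfrak p$ occurs in the support. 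Concretely, for $\mathfrak p \in \Proj R$ one produces a nonzero module (e.g.\ a Carlson-type ``$L_\zeta$'' module, or the image under the equivalences of a residue field object $\kappa(\mathfrak p)$ in $\mathbf{D}(S)$) whose BIK support is $\overline{\{\mathfrak p\}}$, forcing $\Gamma_{\mathfrak p} \neq 0$. Then Theorem 2.7.2 applied with $\Supp_{\text{BIK}}(\mathscr{K}) = \Proj R$ gives exactly the claimed bijection.

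I expect the genuine obstacle to be the surjectivity/realization step: verifying $\Supp_{\text{BIK}}(\Stab(\mathcal{C}_{(\fz,\fz_{\bar 0})})) = \Proj R$ rather than merely a closed subset. For a torus $\fz_{\bar 0}$ with $[\fz_{\bar 0},\fz_{\bar 1}]=0$ the structure is rigid enough --- $\fz$ is essentially a product of copies of $\mathfrak q(1)$ and an abelian piece, and the cohomology is a genuine polynomial ring --- that the realization property is standard; but it does require either invoking the realization part of the support datum $V_{(\fz,\fz_{\bar 0})}(-)$ established in \cite{BKN4} or re-deriving it through the $\mathbf{D}(S)$ model. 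Everything else is formal unwinding of the already-proved Theorem~\ref{BIKstratified} through the BIK classification machine, with the only cosmetic point being the ``nonzero''/``all'' bookkeeping between subcategories and subsets.
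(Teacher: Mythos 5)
Your proposal is correct and follows the same route the paper takes: the corollary is an immediate consequence of Theorem~\ref{BIKstratified} together with the BIK classification theorem, with the weakened hypotheses justified by noting the proof of Theorem~\ref{BIKstratified} only uses that $\fz_{\bar 0}$ is a torus centralizing $\fz_{\bar 1}$ (this is exactly what the paper means by ``we state with the most general hypothesis''). The paper does not explicitly address the $\Supp_{\text{BIK}}(\mathscr{K}) = \Proj R$ verification you flag, but your observation that it reduces to realization through the $\mathbf{D}(S)$ model (where $S$ is a graded-polynomial superalgebra and every homogeneous prime is visibly supported) is the correct way to fill that small gap.
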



\subsection{Nerves-of-Steel for Lie superalgebras}
In order to prove the Nerves-of-Steel Conjecture in Type A, and in order to make use of Theorem \ref{nos}, we need the following assumption on realization of supports. 
\begin{assumption}
Suppose ${\mathfrak g}$ be a classical Lie superalgebra with a splitting subalgebra ${\mathfrak z}\cong {\mathfrak z}_{\bar 0}\oplus {\mathfrak z}_{\bar 1}$. Assume that 
\begin{itemize} 
\item[(i)] ${\mathfrak z}={\mathfrak z}_{\bar 0}\oplus {\mathfrak z}_{\bar 1}$ where ${\mathfrak z}_{\bar 0}$ is a torus and $[{\mathfrak z}_{\bar 0},{\mathfrak z}_{\bar 1}]=0$.
\item[(ii)] Given $W$ an $N$-invariant closed subvariety of $\Proj (\text{S}^{\bullet}(\fz_{\bar 1}^*))$,  there exists $M \in \stab(\mathcal{F}_{(\fg, \fg_{\bar 0})})$ with $V_{(\fz, \fz_{\bar 0})}(M) = W$. 
\end{itemize} 

\end{assumption}\label{assumption}
The following results were proven in \cite{HN24}. Important here is the notion of a tensor triangular field \cite[Definition 1.1]{BKS19}. 
\begin{theorem}\label{noshn}
Let $\fg = \fg_{\0} \oplus \fg_{\bar 1}$ be a classical Lie superalgebra that satisfies Assumption \ref{assumption}. Let $\fz$ denote the splitting, detecting subalgebra so that $\fz = \fz_{\0} \oplus \fz_{\bar 1} \leq \fg$. For $x \in \fz_{\bar 1}$, let $\langle x \rangle$ denote the Lie subsuperalgebra generated by $x$. Then 
\begin{itemize}

\item[(a)] $\Stab(\mathcal{C}_{(\langle x \rangle, \langle x \rangle_{\bar 0})})$ is a tensor triangular field, and the functors 
\begin{equation}
\pi_{x}^{\mathfrak g}: \text{Stab}(C_{({\mathfrak g},{\mathfrak g}_{\0})})\rightarrow \text{Stab}(C_{(\langle x \rangle, \langle x \rangle_{\0})})
\end{equation} 
\begin{equation}
\pi_{x}^{\mathfrak z}: \text{Stab}(C_{({\mathfrak z},{\mathfrak z}_{\0})})\rightarrow \text{Stab}(C_{(\langle x \rangle, \langle x \rangle_{\0})})
\end{equation} 
given by restriction are monoidal exact functors. 

\item[(b)] The Nerves-of-Steel Conjecture holds for each of the TTCs in Part (a). In particular, the comparison map
\begin{equation} 
\phi:\operatorname{Spc}^{\text{h}}(\text{stab}({\mathcal F}_{({\mathfrak g},{\mathfrak g}_{\0})}))\rightarrow \operatorname{Spc}(\text{stab}({\mathcal F}_{({\mathfrak g},{\mathfrak g}_{\0})}))
\end{equation} 
is a bijection. 

\end{itemize}
\end{theorem}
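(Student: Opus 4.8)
The plan is to follow the bootstrapping strategy of \cite{HN24}: first analyze the rank-one subalgebras $\langle x\rangle$, then the detecting subalgebra $\fz$, and finally transport Nerves-of-Steel up to $\fg$.

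For Part (a), I would start by identifying $\langle x\rangle$ for $0\neq x\in\fz_{\bar 1}$. Since $\fg$ satisfies Assumption \ref{assumption}(i), $[\fz_{\bar 0},\fz_{\bar 1}]=0$, so $\langle x\rangle$ has odd part $\mathbb{C}x$ and even part $\mathbb{C}[x,x]$ (recall $[x,[x,x]]=0$ for $x$ odd); hence $\langle x\rangle$ is either the one-dimensional odd abelian superalgebra, with $U(\langle x\rangle)\cong\Lambda(\xi):=\mathbb{C}[\xi]/(\xi^2)$ and $\xi$ odd, or $\langle x\rangle\cong\mathfrak{q}(1)$. In the $\mathfrak{q}(1)$ case I would decompose $\mathcal{C}_{(\langle x\rangle,\langle x\rangle_{\bar 0})}$ by the (semisimple) action of the central even generator $[x,x]$: the blocks with nonzero eigenvalue are semisimple and so vanish in the stable category, while the principal block is again $\Lambda(\xi)\text{-sMod}$. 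Thus $\Stab(\mathcal{C}_{(\langle x\rangle,\langle x\rangle_{\bar 0})})\simeq\Stab(\Lambda(\xi)\text{-sMod})$ in every case, and I would check directly that the latter is a tensor triangular field — stably every module is a coproduct of $\mathbb{C}$ and its parity shift, $\Sigma\simeq\Pi$, and $\End^{\bullet}_{\Stab}(\mathbb{C})\cong\mathbb{C}$ is a graded field. For the functors, restriction along $U(\langle x\rangle)\hookrightarrow U(\fz)\hookrightarrow U(\fg)$ is exact and symmetric monoidal (compatibility of the coproducts and antipodes), preserves the relevant relative subcategories, and — since $U(\fg)$ is free over $U(\langle x\rangle)$ by PBW, combined with Theorem \ref{T:splittingprop}(b) to pass to the relative notion of projectivity — carries projectives to projectives; hence it descends to a coproduct-preserving monoidal exact functor on stable categories.

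For Part (b), the tensor triangular fields are handled at once: for such a category both $\Spc$ and $\Spc^{\text{h}}$ of the compact part are single points, so the comparison map is trivially a bijection. For $\Stab(\mathcal{C}_{(\fz,\fz_{\bar 0})})$, Theorem \ref{BIKstratified} gives stratification in the sense of BIK by the Noetherian ring $\opH^{\bullet}(\fz,\fz_{\bar 0};\mathbb{C})$; since its Balmer spectrum is Noetherian, \cite[Theorem D]{BHS21} upgrades this to tt-stratification, and Theorem \ref{nos} then yields Nerves-of-Steel for $\fz$. For $\fg$ I would argue by transport along restriction: Theorem \ref{T:splittingprop}(b) makes the restriction functor $\stab(\mathcal{F}_{(\fg,\fg_{\bar 0})})\to\stab(\mathcal{F}_{(\fz,\fz_{\bar 0})})$ conservative; Assumption \ref{assumption}(ii) together with \cite[Theorem 5.2.2]{BKN4} identifies $\Spc(\stab(\mathcal{F}_{(\fg,\fg_{\bar 0})}))$ with the quotient $N\text{-}\Proj(\opH^{\bullet}(\fz,\fz_{\bar 0};\mathbb{C}))$ and the induced map on spectra with the quotient map; and conjugation by $N\leq G_{\bar 0}$ acts trivially up to isomorphism on $\fg$-modules, so restriction is $N$-invariant. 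Plugging these into the naturality square for the comparison map — which is surjective by rigidity, a bijection for $\fz$, and for which conservative restriction is surjective on homological spectra — forces $\phi$ to be injective, hence bijective, for $\stab(\mathcal{F}_{(\fg,\fg_{\bar 0})})$. (The reduction can also be pushed one step further through the rank-one subalgebras, using the super analogue of Dade's Lemma for exterior algebras proved in \cite{HN24} to obtain a conservative family $(\pi_x^{\fg})_x$.)

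The hard part is the transport step for $\fg$: proving the detection statements (the Chouinard/Dade-type input, which leans on both the splitting property and the realization Assumption \ref{assumption}(ii)), verifying that a conservative coproduct-preserving tt-functor is surjective on homological spectra, and tracking the $N$-action carefully enough that naturality of the comparison map forces bijectivity over the stack quotient $N\text{-}\Proj$.
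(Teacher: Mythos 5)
The paper does not actually prove Theorem \ref{noshn}; it is quoted verbatim from \cite[Theorem 7.2.1]{HN24}, and the only ``proof'' supplied in this paper is the one-line attribution ``The following results were proven in \cite{HN24}.'' So there is no in-paper argument to compare against, and your attempt has to be judged on its own merits.

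As a reconstruction, your outline is broadly plausible and almost certainly tracks the argument in \cite{HN24}: the dichotomy for $\langle x\rangle$ (trivial one-dimensional odd, or $\mathfrak{q}(1)$), the block decomposition of $\mathcal{C}_{(\langle x\rangle,\langle x\rangle_{\bar 0})}$ by the central element $[x,x]$ with nonprincipal Clifford blocks dying stably, the identification of the principal block with $\Lambda(\xi)\text{-sMod}$ and of its stable category as a tensor triangular field, and the transport of Nerves-of-Steel along the $N$-quotient description of $\Spc(\stab(\mathcal{F}_{(\fg,\fg_{\bar 0})}))$. However, there are three places that are compressed enough to count as gaps. First, to show that $\pi_x^{\fg}$ and $\pi_x^{\fz}$ preserve projectives you cite Theorem \ref{T:splittingprop}(b), but that theorem concerns only the splitting subalgebra $\fz\leq\fg$; for the non-splitting $\langle x\rangle\leq\fg$ you need a direct PBW/Koszul-complex argument in the relative setting, not that theorem. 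Second, the load-bearing step in the transport to $\fg$ is the claim that ``conservative restriction is surjective on homological spectra.'' Conservativity alone does not give this; one needs either the existence of a right adjoint plus a detection property (which the splitting hypothesis does supply via the split unit $M\to\operatorname{ind}_Z^G\operatorname{res}_Z^G M$), or one of the surjectivity criteria in \cite{BHSZ24}/\cite{Bal20}, and you should make that dependence explicit. Third, your injectivity argument for $\phi_{\fg}$ tacitly uses that the $N$-action on $\Spc(\stab(\mathcal{F}_{(\fz,\fz_{\bar 0})}))$ lifts to $\Spc^{\text{h}}(\stab(\mathcal{F}_{(\fz,\fz_{\bar 0})}))$ and that $\phi_{\fz}$ is $N$-equivariant; both are true by functoriality of $\Spc^{\text{h}}$ and naturality of the comparison map, but they should be stated rather than left implicit, since without them the chase around the naturality square does not close. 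None of these are fatal, but they are exactly where the real work of \cite{HN24} is done.
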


\subsection{Stratification for Type A Lie superalgebras}

Now, we turn our attention to $\Stab(\mathcal{C}_{(\fg, \fg_{\bar 0})})$, where $\fg$ is a classical Lie superalgebra which satisfies Assumption \ref{assumption}. Boe, Kujawa, and Nakano computed the Balmer spectrum to be $N\text{-}\Proj(\text{H}^{\bullet}(\fz, \fz_{\0}; \mathbb{C}))$, and showed that $\Stab(\mathcal{C}_{(\fg, \fg_{\bar 0})})$ is not stratified in the sense of BIK by the cohomology ring $\text{H}^{\bullet}(\fg, \fg_{\0}; \mathbb{C})$. The natural question then is whether or not $\Stab(\mathcal{C}_{(\fg, \fg_{\bar 0})})$ is tt-stratified or h-stratified. That for $\Stab(\mathcal{C}_{(\fg, \fg_{\bar 0})})$ these notions are equivalent and satisfied is Theorem A, which we restate now for convenience. 

\begin{mainthm}
Let $\fg$ be a classical Lie superalgebra with a splitting detecting subalgebra $\fz \leq\fg$ and which satisfies the realization condition of Assumption \ref{assumption}. The tensor triangulated category $\Stab(\mathcal{C}_{(\fg, \fg_{\bar 0})})$ is tt-stratified by the Balmer spectrum $\Spc(\stab(\mathcal{F}_{(\fg, \fg_{\bar 0})}))$, and tt-stratification is equivalent to h-stratification. 
\end{mainthm}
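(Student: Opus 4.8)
The plan is to obtain the Main Theorem as an application of the descent theorem of Barthel, Heard, Sanders and Zou (Theorem \ref{descent}), with restriction to the splitting detecting subalgebra $\fz$ serving as the descent functor, and with the two substantive ingredients fed in as cited inputs: the BIK-stratification of $\Stab(\mathcal{C}_{(\fz,\fz_{\bar 0})})$ from Theorem \ref{BIKstratified}, and the Nerves-of-Steel conjecture for $\Stab(\mathcal{C}_{(\fg,\fg_{\bar 0})})$ from Theorem \ref{noshn}. Structurally this parallels the modular picture of \cite{BIK}: the detecting subalgebra $\fz$ plays the role of an elementary abelian subgroup, the reduction from $\fg$ down to $\fz$ is a Choinard-type descent, and the rank-one subalgebras $\langle x\rangle$, $x\in\fz_{\bar 1}$, enter only through the cited Nerves-of-Steel input.

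First I would record that the target $\Stab(\mathcal{C}_{(\fz,\fz_{\bar 0})})$ is tt-stratified. By Theorem \ref{BIKstratified} it is stratified in the sense of BIK by $R=\opH^\bullet(\fz,\fz_{\bar 0};\mathbb{C})$, and its compact Balmer spectrum $\Spc(\stab(\mathcal{F}_{(\fz,\fz_{\bar 0})}))\cong\Proj \text{S}^\bullet(\fz_{\bar 1}^*)$ is Noetherian, so \cite[Theorem D]{BHS21} (``BIK-stratification implies tt-stratification'') applies. I would then line up the remaining hypotheses of Theorem \ref{descent} for $\mathscr{K}=\Stab(\mathcal{C}_{(\fg,\fg_{\bar 0})})$: its compact Balmer spectrum is $\Spc(\stab(\mathcal{F}_{(\fg,\fg_{\bar 0})}))\cong N\text{-}\Proj(\opH^\bullet(\fz,\fz_{\bar 0};\mathbb{C}))$ by Boe--Kujawa--Nakano, which is Noetherian and hence weakly Noetherian, and Nerves-of-Steel holds for $\mathscr{K}$ by Theorem \ref{noshn}(b).

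The heart of the argument is to verify that the restriction functor $\operatorname{res}^{\fg}_{\fz}\colon\Stab(\mathcal{C}_{(\fg,\fg_{\bar 0})})\to\Stab(\mathcal{C}_{(\fz,\fz_{\bar 0})})$ satisfies the two hypotheses of Theorem \ref{descent}. It is well defined on the stable categories because projectives restrict to projectives, and it is exact, symmetric monoidal, preserves set-indexed coproducts, and restricts to a functor on compact objects; by Theorem \ref{T:splittingprop}(b) a module over $\fg$ is projective exactly when its restriction to $\fz$ is, so $\operatorname{res}^{\fg}_{\fz}$ detects the zero object, i.e.\ it is conservative. The right adjoint of $\operatorname{res}^{\fg}_{\fz}$ is coinduction $\operatorname{coind}^{\fg}_{\fz}$, and the projection formula contained in Theorem \ref{T:splittingprop}(c) gives $\operatorname{coind}^{\fg}_{\fz}\operatorname{res}^{\fg}_{\fz}M\cong M\otimes\operatorname{ind}_Z^G\mathbb{C}$ for every $M$. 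Since $\fz$ is splitting, $\mathbb{C}$ is a direct summand of $\operatorname{ind}_Z^G\mathbb{C}$, so $M$ is a direct summand of $\operatorname{coind}^{\fg}_{\fz}\operatorname{res}^{\fg}_{\fz}M$ and therefore lies in the image of the right adjoint; thus $\Stab(\mathcal{C}_{(\fg,\fg_{\bar 0})})$ is generated by the image of the right adjoint of $\operatorname{res}^{\fg}_{\fz}$. Applying Theorem \ref{descent} to the one-term family $\{\operatorname{res}^{\fg}_{\fz}\}$ then shows that its conditions (a), (b), (c) are equivalent and --- since (c) holds --- that $\Stab(\mathcal{C}_{(\fg,\fg_{\bar 0})})$ is both tt-stratified and h-stratified; the support space for the tt-stratification is $\Spc(\stab(\mathcal{F}_{(\fg,\fg_{\bar 0})}))$ by the construction of Balmer--Favi support. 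The equivalence of tt- and h-stratification can alternatively be extracted from Theorem \ref{nos}, whose hypotheses --- weak Noetherianity and Nerves-of-Steel --- are now in force.

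I expect the main obstacle to be not any single formal deduction but the verification that the splitting hypothesis accomplishes exactly the two tasks needed for Theorem \ref{descent}, namely conservativity of $\operatorname{res}^{\fg}_{\fz}$ and generation of $\Stab(\mathcal{C}_{(\fg,\fg_{\bar 0})})$ by the image of $\operatorname{coind}^{\fg}_{\fz}$. The delicate point is the isomorphism $\operatorname{coind}^{\fg}_{\fz}\operatorname{res}^{\fg}_{\fz}(-)\cong(-)\otimes\operatorname{ind}_Z^G\mathbb{C}$ together with the splitting summand, since this is precisely what forces condition (c); the remaining work --- deciding between Theorems \ref{descent} and \ref{nos}, and confirming the (weak) Noetherianity of the Balmer spectra involved --- is routine, while the genuinely hard inputs, the BIK-stratification for $\fz$ and Nerves-of-Steel for $\fg$, are quarantined inside the cited results.
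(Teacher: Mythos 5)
Your proof is correct but reaches the conclusion by a noticeably shorter path than the paper's. The paper applies Theorem \ref{descent} to the family $\big(\pi_x^{\fg}\colon\Stab(\mathcal{C}_{(\fg,\fg_{\bar 0})})\to\Stab(\mathcal{C}_{(\langle x\rangle,\langle x\rangle_{\bar 0})})\big)_{x\in\fz_{\bar 1}}$ of restrictions to rank-one subalgebras, which forces a base change to an extension field $K$ of large transcendence degree (so that these restrictions jointly detect zero) and then requires a two-step argument: first it checks, via transitivity $\operatorname{ind}^G_X=\operatorname{ind}^G_Z\circ\operatorname{ind}^Z_X$ and the splitting hypothesis, that the image of $\operatorname{ind}^G_Z$ generates, and then it appeals to Theorem \ref{descent} a \emph{second} time (with the stratified category being $\Stab(\text{Rep}(Z))$) to see that the images of $\operatorname{ind}^Z_X$ generate $\Stab(\text{Rep}(Z))$. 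You instead apply Theorem \ref{descent} once, to the single functor $\operatorname{res}^{\fg}_{\fz}$, using that $\Stab(\mathcal{C}_{(\fz,\fz_{\bar 0})})$ is already tt-stratified (Theorem \ref{BIKstratified} combined with the ``BIK $\Rightarrow$ tt'' implication from \cite[Theorem D]{BHS21}), that restriction to $\fz$ is conservative by Theorem \ref{T:splittingprop}(b), and that $M$ is a summand of $\operatorname{ind}_Z^G\operatorname{res}^G_Z M\cong M\otimes\operatorname{ind}_Z^G\mathbb{C}$ by the tensor identity underlying Theorem \ref{T:splittingprop}(c) and the splitting summand. This gets condition (c) of Theorem \ref{descent} directly, and (a) and (b) follow by the equivalence. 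What your version buys is economy: no further descent to rank-one subalgebras, no need to splice two generation arguments together, and no explicit base-change step in this part of the argument (any use of tensor-triangular fields over the larger field is quarantined inside the cited Theorem \ref{noshn}). What the paper's version buys is that it makes the role of the rank-one subalgebras --- which are the tensor-triangular fields driving the Nerves-of-Steel input --- visible at the level of the main proof rather than only inside the cited results. Both arguments rest on the same substantive inputs (BIK stratification for $\fz$, Nerves-of-Steel for $\fg$, and the splitting data), so the difference is purely structural.

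One minor terminological note: you call the right adjoint of $\operatorname{res}^{\fg}_{\fz}$ ``coinduction''; in the algebraic-supergroup convention the paper follows this functor is denoted $\operatorname{ind}_Z^G$, and Theorem \ref{T:splittingprop}(c) expresses exactly the tensor-identity you need. No content is affected.
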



\begin{proof}
First note that by Theorem \ref{noshn}, the Nerves-of-Steel Conjecture holds for $\stab(\mathcal{F}_{(\fg, \fg_{\bar 0})})$. Therefore, since $\Spc(\stab(\mathcal{F}_{(\fg, \fg_{\bar 0})}))$ is weakly Noetherian, the equivalence of tt-stratification and h-stratification of $\Stab(\mathcal{C}_{(\fg, \fg_{\bar 0})})$ follows from Theorem \ref{nos}. 
\par Next, in order to show that the equivalent conditions of tt-stratification and h-stratification hold for $\Stab(\mathcal{C}_{(\fg, \fg_{\bar 0})})$, we show that $\Stab(\mathcal{C}_{(\fg, \fg_{\bar 0})})$ is h-stratified. For this, our tool is Theorem \ref{descent}. For this step, we need to work over an extension field $K$ of $\mathbb{C}$ such that the transcendence degree is larger than the dimension of $\mathbb{z}$ (c.f. \cite[Section 6.2]{HN24} or \cite[Example 3.9]{BC21}). Consider the family of monoidal, exact functors 
\begin{equation}\label{family}
\big(\pi_{x}^{\mathfrak g}: \text{Stab}(C_{({\mathfrak g},{\mathfrak g}_{\0})})\rightarrow \text{Stab}(C_{(\langle x \rangle, \langle x \rangle_{\0})})\big)_{x \in \fz_{\bar 1}}, 
\end{equation}
where $\langle x \rangle$ denotes the Lie subsuperalgebra generated by $x$ and is either a one-dimensional abelian Lie subsuperalgebra, or a subsuperalgebra isomorphic to the queer Lie superalgebra $\mathfrak{q}(1)$. In any case, the relevant fact is that for each $x \in \fz_{\bar 1}$, $\text{Stab}(C_{(\langle x \rangle, \langle x \rangle_{\0})})\big)_{x \in \fz_{\bar 1}}$ is BIK stratified, and therefore tt-stratified and h-stratified. Moreover, the collection of functors in \ref{family} jointly detect when an object of $\Stab(\mathcal{C}_{(\fg, \fg_{\bar 0})})$ is zero. Thus, in order to prove that $\Stab(\mathcal{C}_{(\fg, \fg_{\bar 0})})$ is h-stratified, we need to show that $\Stab(\mathcal{C}_{(\fg, \fg_{\bar 0})})$ is generated by the images of the right adjoints of the functors in \ref{family}. 
 \par It turns out to be convenient to work in the context of the ambient algebraic supergroup scheme. Let $G$ denote the ambient algebraic supergroup scheme such that $\Lie(G) = \fg$, and let $Z \leq G$ be a subsupergroup scheme such that $\Lie(Z) = \fz$. By \cite{GGNW21} the categories $\Stab(\mathcal{C}_{(\fg, \fg_{\bar 0})})$ and $\text{Rep}(G)$ are equivalent. The corresponding family of functors to consider is 
\begin{equation}
\big(\text{res}^G_X: \Stab(\text{Rep}(G)) \to \Stab(\text{Rep}(X)\big)_{x \in Z_{\bar 1}}
\end{equation}
which has right adjoints given by induction: 
\begin{equation}\label{fam2}
\big(\text{ind}^G_X: \Stab(\text{Rep}(X)) \to \Stab(\text{Rep}(G)\big)_{x \in Z_{\bar 1}}. 
\end{equation}
We need to show that the images of the functors in \ref{fam2} generate $\Stab(\text{Rep}(G))$. Notice that by transitivity of induction we have 
$$
\text{ind}^G_X(-) = \text{ind}^G_Z  \text{ind}^Z_X(-). 
$$
Since $Z \leq G$ is a splitting subgroup the image of $\text{ind}^G_Z(-)$ generates $\Stab(\text{Rep}(G))$. To see this let $M$ be a module in $\Stab(\text{Rep}(G))$, because $Z$ is splitting, $M$ is a direct summand of $\text{ind}^G_Z\text{res}^G_Z M$. 
\par It only remains to show that the images $\big(\text{ind}^Z_X(-)\big)_{x \in Z_1}$ generate $\Stab(\text{Rep}(Z))$. To prove this, we again appeal to Theorem \ref{descent}, but in a different way. This time, we use the fact that $\Stab(\text{Rep}(Z))$ being generated by the images of $\big(\text{ind}^Z_X(-)\big)_{x \in Z_1}$ is equivalent to $\Stab(\text{Rep}(Z))$ being tt-stratified. But $\Stab(\text{Rep}(Z))$ is BIK stratified by \ref{BIKstratified} which implies tt-stratification. 
\end{proof}

As a consequence, we obtain the classification of tensor ideal localizing subcategories of $\Stab(\mathcal{C}_{(\fg, \fg_{\bar 0})})$, the content of Corollary B, which we repeat here. 

\begin{maincor}
Let $\fg$ be a classical Lie superalgebra with a splitting detecting subalgebra $\fz \leq\fg$ and which satisfies the realization condition of Assumption \ref{assumption}. There is a bijection between the set of $\otimes$-ideal localizing subcategories of $\Stab(\mathcal{C}_{(\fg, \fg_{\bar 0})})$ and subsets of $N\text{-}\Proj(\text{H}^{\bullet}(\fz, \fz_{\0}; \mathbb{C}))$.
\end{maincor}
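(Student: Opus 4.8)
The plan is to deduce the classification purely formally: Theorem A supplies tt-stratification, the abstract classification theorem for tt-stratified categories turns this into a bijection with subsets of a support space, and the Balmer spectrum computation of Boe, Kujawa, and Nakano identifies that space concretely.

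Write $\mathscr{K} = \Stab(\mathcal{C}_{(\fg, \fg_{\0})})$ and $\mathscr{K}^c = \stab(\mathcal{F}_{(\fg, \fg_{\0})})$. First I would apply Theorem A, which gives that $\mathscr{K}$ is tt-stratified by $\Spc(\mathscr{K}^c)$. Feeding this into the tt-stratification classification theorem \cite[Theorem 4.1]{BHS21} yields that the maps $\sigma(\mathscr{C}) = \bigcup_{M \in \mathscr{C}} \Supp_{\text{BF}}(M)$ and $\tau(V) = \{M \in \mathscr{K} \mid \Supp_{\text{BF}}(M) \subseteq V\}$ are mutually inverse bijections between the $\otimes$-ideal localizing subcategories of $\mathscr{K}$ and the subsets of $\Supp_{\text{BF}}(\mathscr{K})$.

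Next I would observe that $\Supp_{\text{BF}}(\mathscr{K})$ is all of $\Spc(\mathscr{K}^c)$: since $(\Supp_{\text{BF}}(-),\Spc(\mathscr{K}^c))$ is a support datum on $\mathscr{K}$ by \cite[Prop. 7.18]{BF11}, property (2.3.1) gives $\Supp_{\text{BF}}(\unit) = \Spc(\mathscr{K}^c)$, and as $\unit \in \mathscr{K}$ we get $\Spc(\mathscr{K}^c) = \Supp_{\text{BF}}(\unit) \subseteq \Supp_{\text{BF}}(\mathscr{K}) \subseteq \Spc(\mathscr{K}^c)$. Hence the bijection of the previous step already runs between the $\otimes$-ideal localizing subcategories of $\Stab(\mathcal{C}_{(\fg, \fg_{\0})})$ and \emph{arbitrary} subsets of $\Spc(\stab(\mathcal{F}_{(\fg, \fg_{\0})}))$. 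Finally I would transport this along the homeomorphism
\[
\Spc(\stab(\mathcal{F}_{(\fg, \fg_{\0})})) \;\cong\; N\text{-}\Proj(\text{H}^{\bullet}(\fz, \fz_{\0}; \mathbb{C}))
\]
of Boe, Kujawa, and Nakano \cite{BKN4}, which applies here because $\fz$ is a splitting detecting subalgebra and $\fg$ satisfies the realization condition of Assumption \ref{assumption} (cf.\ also Theorem \ref{noshn}); a homeomorphism induces a bijection on power sets, and composing gives the stated bijection.

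Given Theorem A, this argument is essentially bookkeeping; the only place that needs attention is confirming that the hypotheses under which \cite{BKN4} computes $\Spc(\stab(\mathcal{F}_{(\fg, \fg_{\0})}))$ are implied by those in the corollary's statement, so that the concrete description of the spectrum is legitimate. If one prefers, the corollary can first be recorded with $\Spc(\stab(\mathcal{F}_{(\fg, \fg_{\0})}))$ in place of $N\text{-}\Proj(\text{H}^{\bullet}(\fz, \fz_{\0}; \mathbb{C}))$, and the identification inserted only at the end — this keeps the stratification-theoretic content cleanly separated from the geometric computation.
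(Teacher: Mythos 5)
Your argument is exactly the paper's (implicit) proof: apply Theorem A to get tt-stratification, feed it through \cite[Theorem 4.1]{BHS21} to obtain the bijection with subsets of $\Spc(\stab(\mathcal{F}_{(\fg, \fg_{\0})}))$, then rewrite that spectrum via the Boe--Kujawa--Nakano homeomorphism $\Spc(\stab(\mathcal{F}_{(\fg, \fg_{\0})})) \cong N\text{-}\Proj(\text{H}^{\bullet}(\fz, \fz_{\0}; \mathbb{C}))$. Your explicit check that $\Supp_{\text{BF}}(\mathscr{K}) = \Spc(\mathscr{K}^c)$ via $\Supp_{\text{BF}}(\unit) = \Spc(\mathscr{K}^c)$ is a small clarification the paper leaves unstated, but it does not change the route.
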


\end{document}